\newcommand*{\QEDA}{\hfill\hbox{\vrule width1.0ex height1.0ex}}
\newtheorem{thm}{Theorem}[section]
\newtheorem{theorem}[thm]{Theorem}
\newtheorem{lemma}[thm]{Lemma}
\newcommand{\beq}{\begin{equation}}
\newcommand{\eeq}{\end{equation}}
\newcommand{\beqa}{\begin{eqnarray}}
\newcommand{\eeqa}{\end{eqnarray}}
\newcommand{\beqas}{\begin{eqnarray*}}
\newcommand{\eeqas}{\end{eqnarray*}}
\newcommand{\bi}{\begin{itemize}}
\newcommand{\ei}{\end{itemize}}
\newcommand{\vgap}{\vspace{.1in}}
\newcommand{\nn}{\nonumber}
\newcommand{\R}{\mathbb{R}}
\newcommand{\lam}{{\lambda}}
\newcommand{\inner}[2]{\langle #1,#2\rangle}
\newcommand{\argmin}{\mathrm{argmin}\,}
\newcommand{\dom}{\mathrm{dom}\,}
\newcommand{\bConv}[1]{\overline{\mbox{\rm Conv}}\,(\R^{#1})}
\newcommand{\tx}{\tilde x}
\begin{document}
\title{An Average Curvature Accelerated Composite Gradient Method \\ for Nonconvex Smooth Composite Optimization Problems}
 \date{September 9, 2019 \\ 
 1st revision: October 18, 2019 \\
 2nd revision: May 19, 2020 \\ 
 3rd revision: September 16, 2020 \\
 4th revision: October 26, 2020}
 \makeatletter
	\maketitle
	
	
\begin{center}
			\textsc{Jiaming Liang \footnote{School of Industrial and Systems
			Engineering, Georgia Institute of
			Technology, Atlanta, GA, 30332-0205.
			(email: {\tt jiaming.liang@gatech.edu}).
			This author
			was partially supported by
			ONR Grant
			N00014-18-1-2077.
			This author was also partially supported by
			NSF grant CCF-1740776 through a joint fellowship from the Algorithms \& Randomness Center (ARC) and
			the Transdisciplinary Research Institute for Advancing Data Science (TRIAD), of which the latter one belongs to the TRIPODS program at NSF and locates at Georgia Tech (http://triad.gatech.edu).}
            and 
			Renato D.C. Monteiro} \footnote{School of Industrial and Systems
			Engineering, Georgia Institute of
			Technology, Atlanta, GA, 30332-0205.
			(email: {\tt monteiro@isye.gatech.edu}). This author
			was partially supported by ONR Grant
			N00014-18-1-2077.}
		\end{center}	

\begin{abstract}
This  paper presents an accelerated composite gradient (ACG) variant, referred to as the AC-ACG method,
for solving nonconvex smooth composite minimization problems.
As opposed to well-known ACG variants that are either based on a known Lipschitz gradient constant or a sequence of maximum observed curvatures, the current one is based on the average of all past observed curvatures.
More specifically, AC-ACG uses a positive multiple of  the average of all observed curvatures until
the previous iteration as a way to estimate the ``function curvature'' at the current point and then two
resolvent evaluations to compute the next iterate. In contrast to other variable Lipschitz estimation
variants, e.g., the ones based on the maximum curvature, AC-ACG always accepts the aforementioned iterate
regardless how poor the Lipschitz estimation  turns out to be. Finally, computational results are presented to illustrate
the efficiency of AC-ACG on both randomly generated and real-world problem instances.

\vspace{.1in}

{\bf Key words.} 
smooth nonconvex composite programming, average curvature,
accelerated composite gradient methods,
first-order methods, iteration-complexity, line search free methods.

\vspace{.1in}

{\bf AMS subject classifications.} 49M05, 49M37, 65K05, 65Y20, 68Q25, 90C26, 90C30.

\end{abstract}

\section{Introduction}\label{sec:intro}
In this paper, we study an ACG-type algorithm for solving a nonconvex smooth composite optimization (SCO) problem
\begin{equation}\label{eq:PenProb2Intro}
\phi_*:=\min \left\{ \phi(z):=f(z) + h(z)  : z \in \mathbb{R}^n \right \}
\end{equation}
where $f$ is a real-valued differentiable (possibly nonconvex) function with an $M$-Lipschitz continuous gradient
on $\dom h$ and $h:\R^n \to (-\infty,\infty]$ is a proper lower semicontinuous convex function with a bounded domain.

A large class of algorithms for solving \eqref{eq:PenProb2Intro} sets the next iterate
$y_{k+1}$ as the unique optimal
solution $y(\tilde x_k;M_k)$ of
the linearized prox subproblem 
\begin{equation}\label{eq:update}
y(\tilde x_k;M_k) :=\argmin\left\lbrace  \ell_f(x;\tx_k) + h(x) + \frac{M_k}2 \|x-\tx_k\|^2 : x \in \R^n \right\rbrace 
\end{equation}
where $\ell_f(x;\tx_k) := f(\tx_k) + \inner{\nabla f(\tx_k)}{x-\tx_k}$, the prox-center
$\tx_k$ is chosen as either the current iterate $y_k$ (as in unaccelerated algorithms)
or a convex combination of
$y_k$ and another auxiliary iterate $x_k$ (as in accelerated algorithms), and
$M_k$ is good upper curvature of
$f$ at $\tx_k$, i.e.,
$M_k>0$ and satisfies
\begin{equation}\label{ineq:descent}
{\cal C}(y(\tx_k;M_k);\tx_k) \le M_k
\end{equation}
where
\begin{equation}\label{eq:cal C}
{\cal C}(y;\tx) := \frac{2\left[ f(y)-\ell_f(y; \tx) \right] }{\|y-\tx\|^2}.
\end{equation}
Regardless of the choice of $\tx_k$, it is
well-known that
the smaller the sequence $\{M_k\}$ is, the faster
the convergence rate of the method becomes.
Hence, it is desirable to choose $M_k=\bar M_k$ where $\bar M_k$,
referred to  as the local curvature of $ f $ at $\tx_k$,
is the smallest value of $M_k$
satisfying \eqref{ineq:descent}.
However, since finding $\bar M_k$ is generally time-consuming,  alternative strategies that upper estimate
$\bar M_k$ are used. A common one 
is a backtracking procedure that initially sets
$M_k$ to be the maximum of all the observed curvatures ${\cal C}_1, \ldots,{\cal C}_{k-1}$ 
where ${\cal C}_i := {\cal C}(y_{i+1};\tx_i)$ for every $i \ge 1$. It then checks whether
$M_k$ is a good curvature of $f$ at $\tx_k$;
if so, it sets $y_{k+1}=y(\tx_k;M_k)$; otherwise,
it updates $ M_k \leftarrow \eta M_k $ for some parameter $ \eta>1 $, and then repeats
this same step again.
%
Such an approach has been used extensively in the literature dealing
with composite gradient methods both in the context of convex and nonconvex
SCO (N-SCO) problems (see for example \cite{beck2009fista,LanUniformly,liang2019fista,nesterov2012gradient})
and can be efficient particularly for those SCO instances
where a sharp upper bound $M$ on the smallest Lipschitz constant $\bar M$ of $\nabla f$ on $\dom h$
is not available.

This paper investigates an ACG variant for solving the N-SCO problem where $M_k$ is computed
as a positive multiple of the average of all observed curvatures up to the previous iteration. As opposed
to ACG variants based on the scheme outlined above as well as other ACG variants, AC-ACG always computes
a new step regardless of whether $M_k$ overestimates or underestimates ${\cal C}_k$. More specifically,
if $M_k$ overestimates ${\cal C}_k$ then a composite step as in \eqref{eq:update} is taken; otherwise,
$y_{k+1}$ is set to be a convex combination of $y_k$ and an auxiliary iterate $x_{k+1}$, which
is obtained by a resolvent evaluation of $h$. It is worth noting that both of these steps are used
in previous ACG variants but only one of them is used at a time.
The main result of the paper establishes a convergence rate for AC-ACG. More specifically, it states that $k$ iterations of the AC-ACG method generate a pair $ (y,v)$ satisfying
$ v \in \nabla f(y) + \partial h(y)$ and $\|v\|^2={\cal O}(M_k/k)$ where $M_k$
is as in the beginning of this paragraph.
Since $M_k$ is usually much smaller than $\bar M$ or even
$\bar M_k$, this convergence rate bound explains the efficiency  of AC-ACG to solve
both randomly generated and real-world problem instances of \eqref{eq:PenProb2Intro}
used in our numerical experiments.
Finally, it is shown that AC-ACG also has similar iteration-complexity as previous  ACG variants (e.g., \cite{nonconv_lan16,KongMeloMonteiro,jliang2018double,liang2019fista}).

{\bf Related works.} 
The first complexity analysis of an ACG algorithm for solving \eqref{eq:PenProb2Intro}
 under the assumption that $f$ is a nonconvex differentiable function whose gradient is
 Lipschitz continuous and that $h$ is a simple lower semicontinuous
convex function is established in the novel work \cite{nonconv_lan16}. 
Inspired by \cite{nonconv_lan16}, many papers have proposed other ACG variants for
solving (\ref{eq:PenProb2Intro}) under
the aforementioned  assumptions (see e.g.,\ \cite{Paquette2017, LanUniformly,liang2019fista}) 
or even under the relaxed assumption that
$ h $ is nonconvex (see e.g.,\ \cite{Li_Lin2015, Lin_Zhou_Liang_Varshney, Yao_et.al.}).
It is worth mentioning that: i)  in contrast to \cite{nonconv_lan16,liang2019fista}, the other works
deal with hybrid-type accelerated methods that resort to unaccelerated composite gradient steps
whenever a certain descent property is not satisfied; and ii) in contrast to the methods of
\cite{LanUniformly,Li_Lin2015,liang2019fista} that choose $M_k$ adaptively in a manner similar to that described in the second paragraph in Section \ref{sec:intro},
the methods in \cite{Paquette2017,nonconv_lan16,Lin_Zhou_Liang_Varshney,Yao_et.al.} works with a constant sequence $\{M_k\}$, namely, $M_k=M$ for some $M > \bar M$. 
Section \ref{sec:comparison} provides
a more detailed overview of ACG variants for solving both convex and nonconvex SCO problems
which includes most of the ones just mentioned.
%

Other approaches towards solving \eqref{eq:PenProb2Intro} use an inexact proximal point scheme where each prox subproblem
is constructed to be (possibly strongly) convex and hence efficiently solvable by a convex ACG variant.
Papers
\cite{carmon2018accelerated,KongMeloMonteiro,rohtua} propose a descent unaccelerated inexact 
proximal-type method, which works with a larger prox stepsize and hence has a better outer iteration-complexity than the approaches in the previous paragraph.
Paper \cite{jliang2018double} presents an accelerated inexact proximal point method that performs an accelerated step with a large prox stepsize in every outer iteration and requires a prox subproblem to be approximately solved by an ACG variant in the same way as in
the algorithms presented in  \cite{carmon2018accelerated,KongMeloMonteiro}.

{\bf Definitions and notations.} 
The set of real numbers is denoted by $\mathbb{R}$. The set of non-negative real numbers  and 
the set of positive real numbers are denoted by $\mathbb{R}_+$ and $\mathbb{R}_{++}$, respectively. Let $\mathbb{R}^n$ denote the standard $n$-dimensional Euclidean 
space with  inner product and norm denoted by $\left\langle \cdot,\cdot\right\rangle $
and $\|\cdot\|$, respectively. 
The Frobenius inner product and Frobenius norm in $ \R^{m\times n} $
are denoted by $ \inner{\cdot}{\cdot}_F $ and $ \|\cdot\|_F $, respectively.
The set of real $ n\times n $ symmetric matrices is denoted by $ {\cal S}^n $, and we define $ {\cal S}^n_{+} $ to be the subset of $ {\cal S}^n $ consisting of the positive semidefinite matrices.
The indicator function $I_S$ of a set
$ S\subset \R^n $ is defined as $ I_S (z) =0 $ for every $ z\in S, $ and
$ I_S (z) =\infty $, otherwise. 
The cardinality of a finite set $ {\cal A} $ is denoted by $ |{\cal A}| $. 
Let ${\cal O}_1(\cdot)$ denote ${\cal O}(\cdot + 1)$ where ${\cal O}$ is the big O notation.

Let $\psi: \mathbb{R}^n\rightarrow (-\infty,+\infty]$ be given. The effective domain of $\psi$ is denoted by
$\dom \psi:=\{x \in \mathbb{R}^n: \psi (x) <\infty\}$ and $\psi$ is proper if $\dom \psi \ne \emptyset$.
Moreover, a proper function $\psi: \mathbb{R}^n\rightarrow (-\infty,+\infty]$ is said to be $\mu$-strongly convex for some $\mu \ge 0$ if
$$
\psi(\alpha z+(1-\alpha) u)\leq \alpha \psi(z)+(1-\alpha)\psi(u) - \frac{\alpha(1-\alpha) \mu}{2}\|z-u\|^2
$$
for every $z, u \in \dom \psi$ and $\alpha \in [0,1]$.
If $\psi$ is differentiable at $\bar z \in \mathbb{R}^n$, then its affine   approximation $\ell_\psi(\cdot;\bar z)$ at $\bar z$ is defined as
\[
\ell_\psi(z;\bar z) :=  \psi(\bar z) + \inner{\nabla \psi(\bar z)}{z-\bar z} \quad \forall  z \in \mathbb{R}^n.
\]
The subdifferential of $\psi$ at $z \in \R^n$ is denoted by $\partial \psi (z)$.
The set of all proper lower semi-continuous convex functions $\psi:\mathbb{R}^n\rightarrow (-\infty,+\infty]$  is denoted by $\bConv{n}$.

{\bf Organization of the paper.} 
Section~\ref{sec:alg+cmplx} describes the N-SCO problem and the assumptions made on it.
It also presents the AC-ACG method for solving the N-SCO problem
and describes the main result of the paper, which
establishes a convergence rate bound for AC-ACG in terms of the average of observed curvatures.
Section \ref{sec:comparison} contains three subsections. The first subsection reviews three ACG variants for solving convex SCO (C-SCO) problems. The second (resp. third) one reviews pure (resp. hybrid) ACG variants for solving N-SCO problems.
Section~\ref{sec:proof} provides the proof of the main result stated in Section \ref{sec:alg+cmplx}.
Section~\ref{sec:numerics} presents computational results illustrating the efficiency of the AC-ACG method.
Section \ref{sec:conclusion} presents some concluding remarks.
Finally, the appendix contains a technical result.

\section{The AC-ACG method for solving the N-SCO problem}\label{sec:alg+cmplx}

This section presents the main algorithm studied in this paper, namely, an ACG method based on a sequence of average curvatures, and derives a convergence rate for it expressed in terms of this sequence.
More specifically,
it describes the N-SCO problem and the assumptions made on it,
presents the AC-ACG method and states the main result of the paper, i.e., 
the convergence rate of the AC-ACG method.


The problem of interest in this paper is the N-SCO problem \eqref{eq:PenProb2Intro},
where the following conditions are assumed to hold:
\begin{itemize}
	\item[(A1)] $h \in \bConv{n}$;  
	\item[(A2)] $f$ is a nonconvex differentiable function on $ \dom h$ and there exist scalars
	$m \ge 0$, $ M \ge 0 $ such that
for every $u,u' \in \dom h$,
	\beq\label{ineq:curv}
	-\frac {m}2\|u-u'\|^2\le f(u)-\ell_f(u;u'),  \qquad 
\|\nabla f(u)- \nabla f(u') \| \le M\|u-u'\|;
	\eeq
	\item[(A3)]  the diameter $D  :=  \sup \{  \| u-u' \| : u, u' \in \dom h\}$ is bounded.
\end{itemize}

Throughout the paper, we let $ \bar m $ (resp., $\bar M$) denote the smallest scalar
$m \ge 0$ (resp., $M \ge 0$)
satisfying the first (resp., second) inequality in
\eqref{ineq:curv}.

We now make some remarks about the above assumptions.
First, the set of optimal solutions $ X_* $ is nonempty and compact in view of (A1)-(A3).
Second, the second inequality in \eqref{ineq:curv} implies
\begin{equation}\label{ineq:upper}
- \frac {M}2\|u-u'\|^2 \le  f(u)-\ell_f(u;u')\le\frac {M}2\|u-u'\|^2 \quad \forall u,u'\in \dom h.
\end{equation}
Third, the last remark together with the fact that $f$ is nonconvex on $\dom h$ due to assumption
(A2) implies that
$0 < \bar m\le \bar M$. 
Fourth, assumption (A3) is
used in the proofs of Lemma \ref{lem:tech1}(b) and Lemma \ref{lem:basic}(b).

A necessary condition for $\hat y$ to be a local minimum of \eqref{eq:PenProb2Intro} is that
 $ 0\in \nabla f(\hat y) + \partial h(\hat y) $, i.e,  $ \hat y $ be a stationary point of \eqref{eq:PenProb2Intro}.
More generally, given a tolerance $ \hat \rho>0 $, a pair $ (\hat y, \hat v) $ is called a $ \hat \rho $-approximate
stationary pair of $ \eqref{eq:PenProb2Intro} $ if
\begin{equation}\label{incl:v}
\hat v \in \nabla f(\hat y) + \partial h(\hat y), \quad \|\hat v\|\le \hat \rho.
\end{equation}

We are now ready to state the
AC-ACG method, which
stops when a $ \hat \rho $-approximate
stationary pair of $ \eqref{eq:PenProb2Intro} $ is computed. AC-ACG requires as
input a scalar
$M \ge \bar M$ where $\bar M$ is defined in
the paragraph following (A3).

%

\noindent\rule[0.5ex]{1\columnwidth}{1pt}

Average Curvature - Accelerated Composite Gradient (AC-ACG)

\noindent\rule[0.5ex]{1\columnwidth}{1pt}

\begin{itemize}
	\item[0.] Let a parameter $ \gamma \in(0,1)$,
a scalar $M\ge \bar M$, a tolerance $ \hat \rho>0 $  and an initial point $y_0 \in \dom h$ be given and set $A_0 =0$, $x_0=y_0$, $M_0=\gamma M$, $k=0$ and
\begin{equation}\label{eq:alpha}
\alpha = \frac{0.9}{8}\left( 1+\frac{1}{0.9\gamma}\right)^{-1};
\end{equation}
	\item[1.] compute
	\begin{equation}\label{eq:aktx}
	a_k = \frac{1+ \sqrt{1 + 4 M_{k}A_k}}{2M_{k}},\quad
	A_{k+1} = A_k + a_k, \quad
	\tx_k  =  \frac{A_ky_k+a_kx_k}{A_{k+1}};
	\end{equation}
	\item[2.] set $y_{k+1}^g=y(\tx_k;M_k)$ where
	$y(\cdot;\cdot)$ is as in \eqref{eq:update} and compute
	\begin{align}
	x_{k+1} &= \underset{u\in \R^n}{\mbox{argmin}} \left\{ a_k \left[ \ell_f(u; \tilde{x}_k) + h(u) \right] + \frac{1}{2} \| u - x_k \|^2 \right\},\label{eq:x+}\\ 
	v_{k+1} &=  M_k(\tilde{x}_k - y^g_{k+1}) + \nabla f(y^g_{k+1}) - \nabla f(\tilde{x}_k); \label{eq:vk}
	\end{align}
	\item[3.]
	if $\|v_{k+1} \| \le \hat \rho$ then output  $(\hat y,\hat v)=(y^g_{k+1},v_{k+1})$ and {\bf stop}; otherwise, compute
	\begin{align}
 C_k &=  \max\left\lbrace 
{\cal C}(y_{k+1}^g;\tx_k),\frac{\|\nabla f(y^g_{k+1}) - \nabla f(\tilde{x}_k)\|}{\|y^g_{k+1}-\tx_k\|}
	\right\rbrace,  \label{eq:C} \\
	C^{avg}_k&=\frac{1}{k+1} \sum_{j=0}^{k} C_j, \label{eq:Cave} \\
	M_{k+1} &= \max \left\{ \frac1\alpha C^{avg}_k, \gamma M \right \} \label{eq:M}
	\end{align}
	where ${\cal C}(\cdot;\cdot)$ is as in \eqref{eq:cal C};
	\item[4.] set
	\begin{equation}
	y_{k+1} = \left\{ \begin{array}{cc}  
	y_{k+1}^b:=\frac{A_k y_k + a_k x_{k+1}}{A_{k+1}}, & \mbox{if $C_k>0.9M_k$};  \\ [.1in]
    y^g_{k+1}, & \mbox{otherwise} 
    \end{array} \right. \label{eq:ty}
	\end{equation}
and $k \leftarrow k+1$, and go to step 1.
\end{itemize}

\noindent\rule[0.5ex]{1\columnwidth}{1pt}

We add a few observations about the AC-ACG method.
First, the first two identities in \eqref{eq:aktx} imply that
\begin{equation}\label{eq:rel}
A_{k+1} = M_k a_k^2.
\end{equation}
Second, the AC-ACG method evaluates two gradients of $f$ and exactly two resolvents of $h$,
(i.e., an evaluation of $(I+\lambda \partial h)^{-1}(\cdot)$ for some $\lambda>0$) per iteration,
namely, one in \eqref{eq:update} and the other one in \eqref{eq:x+}.
Third, Theorem \ref{thm:main} below guarantees that AC-ACG always terminates and outputs
a $\hat \rho$-approximate solution $(\hat y, \hat v)$ (see step 3).
Fourth, $C_k$ is the most recent observed curvature, $C_k^{avg}$ is the average of all observed curvatures obtained
so far and $M_{k+1}$ is a modified average curvature that will be used in the
next iteration to compute $y_{k+2}^g$. Fifth, the observed curvature $C_k$ used here is
different from the one mentioned in the Introduction (see \eqref{ineq:descent})
and it is more suitable
for our theoretical analysis. 
Sixth, every iteration starts with a triple $(A_k,x_k,y_k)$ and
obtains the next one $(A_{k+1},x_{k+1},y_{k+1})$ as in \eqref{eq:aktx}, \eqref{eq:x+} and \eqref{eq:update}.
The iterate $y_{k+1}$ is chosen to be either $y^g_{k+1}=y(\tx_k;M_k)$ obtained in \eqref{eq:update}
or the convex combination $y^b_{k+1}$ defined in \eqref{eq:ty} depending on whether
the current curvature $C_k$ is smaller than or equal to a multiple (e.g., $0.9$) of
the modified average curvature $M_k$ or not, respectively.
Seventh, in the iterations for
which $C_k \le 0.9M_k$ (called the good ones), 
$ M_k $ is clearly a good upper curvature of $f$ at
$\tx_k$ in view of the definitions of $ C_k $ and $y_{k+1}^g$
in \eqref{eq:C} and step 2 of AC-ACG, respectively,
and the definition of a good curvature in \eqref{ineq:descent}.
Thus, assuming that the
frequency of good iterations is relatively high,
it is reasonable to expect that the smaller the sequence
$\{M_k\}$ is,
the faster the convergence rate of AC-ACG will be
(see the discussion after \eqref{ineq:descent} in the Introduction).
Eighth, it follows as a consequence of
the results of Section \ref{sec:proof}
that the number of good iterations is
relatively large (see Lemma \ref{lem:card}) and that the overall effect of the bad ones are nicely under control (see Lemma \ref{lem:sum}).
Moreover, Theorem \ref{thm:main} below states
that the convergence rate of AC-ACG
is directly proportional to $\sqrt{M_k}$ in that
$\min \{\|v_i\| : i \le k \} = {\cal O}(\sqrt{M_k}/\sqrt{k})$.

We now discuss the likelihood of $M_{k+1}$,
or equivalently, $\gamma_{k+1}:= M_{k+1}/ M$, being small. First observe that \eqref{eq:M} implies that
$\gamma_{k+1}\ge \gamma$. Hence, let us examine the situation
in which $\gamma_{k+1}= \gamma$, i.e., $\gamma_{k+1}$
reaches its lowest possible value for a fixed $\gamma \in (0,1)$.
Clearly, it follows from \eqref{eq:M} that
$\gamma_{k+1}= \gamma$ if and only if
\beq \label{ineq:gamma'}
\frac{C_k^{avg}}M \le \alpha\gamma.
\eeq
Moreover, in view of \eqref{eq:alpha} and the fact
that $\gamma<1$, it follows that
$\alpha={\Theta}(\gamma)$, and hence 
\eqref{ineq:gamma'} implies that
$C_k^{avg}/M = {\cal O}(\gamma^2)$.
In conclusion, under the restrictive choice of $\alpha$
in \eqref{eq:alpha}, $\gamma_{k+1}=\gamma$ can only
happen when 
the computed average curvature ratio $C_k^{avg}/M$ is
${\cal O}(\gamma^2)$.
However, choice \eqref{eq:alpha} for 
$\alpha$ is too conservative in practice.
Indeed, it follows from
the proof of Lemma \ref{lem:card} and the
arguments in the paragraph following it that
in practice $\alpha \in (0,1)$ can be chosen as $\Theta(1)$
instead of $\Theta(\gamma)$ as above. Clearly, with such
a choice of $\alpha$, \eqref{ineq:gamma'} implies that
the ratio $C_k^{avg}/M$ is ${\cal O}(\gamma)$
instead of ${\cal O}(\gamma^2)$ as above.
In summary, if $\gamma \in (0,1)$ is relatively small
and $\alpha$ is chosen as $(0,1) \ni \alpha = \Theta(1)$ instead of \eqref{eq:alpha},
then the chances of having $\gamma_{k+1}=\gamma$
increases.
In view of the aforementioned observation,
the two AC-ACG variants which are
computationally profiled
in
Section \ref{sec:numerics}
relax the choice of $\alpha$
from \eqref{eq:alpha} to one satisfying
$(0,1) \ni \alpha = \Theta(1)$.

We now state the main result of the paper which describes
how fast one of the iterates $y_1^g,\ldots,y_k^g$ approaches the
stationary condition $0 \in \nabla f(y)+\partial h(y)$. A remarkable feature of its convergence
rate bound is that it is expressed in terms of $M_k$ rather than
a scalar $M \ge \bar M$.

\begin{theorem}\label{thm:main}
The following statements hold:
\begin{itemize}

\item[(a)]
for every $k \ge 1 $, we have
$v_k \in \nabla f(y_{k}^g)+\partial h(y_{k}^g)$;
\item[(b)]
	for every $ k\ge 12 $,  we have
	\[
	\min_{1 \le i \le k}\|v_{i}\|^2= {\cal O}
	\left( 
	\frac{ M_k^2D^2}{\gamma k^2} +
	\frac{\theta_k\bar mM_kD^2}{k} \right)
	\]
where 
\begin{equation}\label{def:theta}
\theta_{k}:= \max \left\{\frac{M_k}{M_i}  : 0 \le i \le k \right\} \ge 1.
\end{equation}
\end{itemize}
\end{theorem}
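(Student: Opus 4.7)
The plan is to prove part (a) directly from the first-order optimality condition for the prox-subproblem defining $y_{k+1}^g$, and then establish part (b) by combining a variable-curvature ACG potential inequality, a telescoping argument controlled by the diameter bound (A3), and the good/bad iteration dichotomy built into the algorithm.

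For part (a), the iterate $y_{k+1}^g=y(\tx_k;M_k)$ in \eqref{eq:update} satisfies $0\in \nabla f(\tx_k)+M_k(y_{k+1}^g-\tx_k)+\partial h(y_{k+1}^g)$, i.e., $M_k(\tx_k-y_{k+1}^g)-\nabla f(\tx_k)\in \partial h(y_{k+1}^g)$. Adding $\nabla f(y_{k+1}^g)$ to both sides and comparing with \eqref{eq:vk} gives exactly $v_{k+1}\in \nabla f(y_{k+1}^g)+\partial h(y_{k+1}^g)$, which, after shifting the index, is the claim.

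For part (b), the main work is to derive a one-step descent estimate of the form
\[
A_{k+1}[\phi(y_{k+1})-\phi(z)]+\tfrac12\|x_{k+1}-z\|^2\le A_k[\phi(y_k)-\phi(z)]+\tfrac12\|x_k-z\|^2+E_k(z)
\]
for every $z\in \dom h$, where $E_k(z)$ absorbs two error sources: the nonconvex slack from the first inequality in \eqref{ineq:curv}, which produces a term scaling as $a_k\bar m D^2$, and, on bad iterations (those with $C_k>0.9M_k$, where $y_{k+1}=y_{k+1}^b$ rather than $y_{k+1}^g$), a term that can be controlled via Lemma \ref{lem:sum}. The ingredients are the three-point/convex-combination identity in \eqref{eq:aktx}, the optimality of $x_{k+1}$ in \eqref{eq:x+}, and, for good iterations only, the descent inequality $f(y_{k+1}^g)\le \ell_f(y_{k+1}^g;\tx_k)+\tfrac{C_k}{2}\|y_{k+1}^g-\tx_k\|^2$ with $C_k\le 0.9M_k$. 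Telescoping and using (A3) to bound both the initial potential and the $E_k$ sum yields, after using the relation $A_{k+1}=M_ka_k^2$ from \eqref{eq:rel} together with $M_k\ge \gamma M$, an estimate of the form
\[
A_{k+1}\,[\phi(y_{k+1})-\phi_*]+\sum_{i\in I_g,\,i\le k} \tau_i M_i\|y_{i+1}^g-\tx_i\|^2 = \mathcal{O}\!\left(D^2+\bar m D^2\sum_{i\le k} a_i\right),
\]
for some positive weights $\tau_i$ bounded away from $0$, while standard estimates on $a_i,A_i$ give $A_{k+1}=\Omega(k^2/M_k)$ and $\sum_{i\le k}a_i=\mathcal{O}(k/\sqrt{M_k})\cdot \sqrt{\theta_k}$ or similar.

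The final step is to convert the aggregated curvature bound into a residual bound. On every good iteration, \eqref{eq:vk} and the definition of $C_k$ in \eqref{eq:C} give $\|v_{k+1}\|\le (M_k+C_k)\|y_{k+1}^g-\tx_k\|\le 2M_k\|y_{k+1}^g-\tx_k\|$, so $\|v_{k+1}\|^2\le 4M_k\cdot M_k\|y_{k+1}^g-\tx_k\|^2$. Dividing through by $M_i$ and using $M_k/M_i\le \theta_k$ introduces the $\theta_k$ factor, while Lemma \ref{lem:card} guarantees that the number of good indices $i\le k$ is $\Omega(k)$, which lets one pass from a weighted sum to a minimum. Putting everything together produces the two terms $M_k^2 D^2/(\gamma k^2)$ and $\theta_k\bar m M_k D^2/k$ in the stated bound. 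The principal obstacle is the first step, namely, constructing the one-step potential inequality in a way that cleanly accommodates both the variable estimate $M_k$ (which may under- or over-estimate the true local curvature) and the alternative iterate $y_{k+1}^b$ used on bad iterations; the remaining computations are careful but routine bookkeeping on $A_k$, $a_k$, $\theta_k$, and the two disjoint index sets of good and bad iterations.
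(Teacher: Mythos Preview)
Your proof of part (a) is correct and matches the paper. The high-level structure you describe for part (b) --- a potential function $\eta_k(u)=A_k(\phi(y_k)-\phi(u))+\tfrac12\|u-x_k\|^2$, a one-step inequality that produces a useful term $\sim A_{i+1}(M_i-F_i)\|y_{i+1}-\tx_i\|^2$ on good iterations and a controlled error on bad ones, and then summing --- is exactly the paper's route (Lemmas~\ref{lem:tech2}--\ref{lem:sum}). However, there is a genuine gap in how you close the argument.

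The missing ingredient is the quasi-monotonicity estimate $M_k \ge (i/k)\,M_i$ (Lemma~\ref{lem:avg} in the paper), which is a direct consequence of the \emph{averaging} definition of $M_k$ via $C_{k-1}^{avg}$ in \eqref{eq:Cave}--\eqref{eq:M}. This is what lets you convert information about the variable sequence $\{M_i\}$ into information about the single number $M_k$. Concretely, it is needed twice. First, to prove $A_k \ge k^2/(12M_k)$ (your ``standard estimate'' $A_{k+1}=\Omega(k^2/M_k)$ is not standard: from $\sqrt{A_{i+1}}\ge \sqrt{A_i}+1/(2\sqrt{M_i})$ you only get $A_k\ge \tfrac14(\sum 1/\sqrt{M_i})^2$, and to bound this below in terms of $M_k$ you need an \emph{upper} bound on $M_i$, which $\theta_k$ does not provide). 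Second, and more critically, to lower bound the coefficient $\sum_{i\in{\cal G}_k} A_{i+1}/M_i$ of $\min_i\|v_i\|^2$ by $\Omega(k^3/M_k^2)$ (Lemma~\ref{lem:est3}). Knowing only that $|{\cal G}_k|=\Omega(k)$ is not enough: the summands $A_{i+1}/M_i$ vary wildly, and you need $M_i \le (k/i)M_k$ to control them. Your proposed substitution ``$M_k/M_i\le \theta_k$'' yields $1/M_i \le \theta_k/M_k$, i.e.\ an \emph{upper} bound on $1/M_i$, which goes the wrong direction for a lower bound on the sum.

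Relatedly, your bookkeeping on the right-hand side is off. You write $\sum_{i\le k} a_i = {\cal O}(k/\sqrt{M_k})\sqrt{\theta_k}$, but in fact $\sum_{i\le k-1} a_i = A_k$ exactly, and the paper bounds this by $A_k \le k^2\theta_k/M_k$ (Lemma~\ref{estimates}). This is where $\theta_k$ actually enters the final estimate --- on the $\bar m$ term of the right-hand side --- not through the left-hand side coefficient as you suggest. Once Lemma~\ref{lem:avg} is in place, the paper combines $\sum_{i\in{\cal G}_k}A_{i+1}/M_i \gtrsim k^3/M_k^2$ on the left with $D^2 + \bar m D^2 k^2\theta_k/M_k + (1-\gamma)D^2k/(3\gamma)$ on the right, and the two displayed terms in the theorem drop out immediately.
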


We now make two remarks about Theorem \ref{thm:main}.
First, it immediately leads to a worst-case iteration-complexity bound as follows.
In view of the second inequality in \eqref{ineq:curv},
the second inequality in \eqref{ineq:upper}, the definition of $ \bar M $ in the paragraph following (A3), and relation \eqref{eq:C}, it follows that
for every $k \ge 0$,
$C_k \le \bar M$, and
hence that $C_k^{avg} \le \bar M$
in view of \eqref{eq:Cave}.
The latter inequality, \eqref{eq:M}, and the fact that
$\alpha = {\Theta}(\gamma)$ (see the line following
\eqref{ineq:gamma'}),
then imply that
$M/M_{k+1}\le 1/\gamma $ and
\begin{equation}\label{ineq:estimates}
\frac{M_{k+1}}M = {\cal O}\left( \frac{\bar M}{\alpha M} + \gamma  \right) = {\cal O}\left( \frac{\bar M}{\gamma M} + \gamma  \right)    
\end{equation}
for every $k \ge 0$.
These two estimates and the definition of $ \theta_k $ in \eqref{def:theta} then 
imply that,
for some $ i\le k $, we have
\[
\theta_k
=\frac{M_k}{M} \frac{M}{M_i} ={\cal O}\left( \left( \frac{\bar M}{\gamma M} + \gamma\right) \frac{1}{\gamma} \right) 
={\cal O}\left( \frac{\bar M}{\gamma^2M} + 1 \right).
\]
Moreover, it follows from Theorem \ref{thm:main}(b) that the iteration-complexity for  AC-ACG to obtain a $\hat \rho$-approximate stationary pair $(\hat y,\hat v)$ is
\[
{\cal O}_1\left( \frac{ M_kD}{\gamma^{1/2} \hat \rho} +
\frac{\theta_k\bar mM_kD^2}{\hat \rho^2} \right)
={\cal O}_1\left( M_k\left( \frac{ D}{\gamma^{1/2} \hat \rho} +
\theta_k\frac{\bar mD^2}{\hat \rho^2} \right)\right)
\]
which, in view of
\eqref{ineq:estimates}, the above estimate on $ \theta_k $, and the facts that $\gamma<1$ and $M\ge \bar M$ (see step 0 of AC-ACG),
is bounded by
\begin{equation}\label{eq:bound}
{\cal O}_1
\left(
\left[\frac{ D}{\gamma^{1/2}\hat \rho} + \left( \frac{\bar M}{\gamma^2M} +1\right)
\frac{\bar m D^2}{\hat \rho^2} \right]
\left( \frac{\bar M}{\gamma} + \gamma M \right)  \right)
= {\cal O}_1\left( \frac{MD}{\gamma^{3/2}\hat \rho} + \frac{\bar m M D^2}{\gamma^3\hat \rho^2}\right ).
\end{equation}
Hence, for small values of $\gamma$, the worst-case iteration-complexity of AC-ACG
is high but, if $\gamma$ is viewed as a constant, i.e., $1/\gamma = {\cal O}(1)$,
then the above complexity is as good as any other ACG method found in the literature 
for solving the N-SCO problem as long as the second term in \eqref{eq:bound} is the dominant one.
In particular, in terms of $\hat \rho$ only,
its worst-case iteration-complexity for solving an N-SCO problem is ${\cal O}(1/{\hat \rho}^2)$, which is
identical to that of any other
known ACG method (see e.g.,\ \cite{nonconv_lan16,KongMeloMonteiro,jliang2018double,liang2019fista}).

Second, the dependence of the worst-case iteration-complexity \eqref{eq:bound} on
$\gamma$ is not good because it is obtained using
the conservative estimate \eqref{ineq:estimates}.
We will now examine the iteration-complexity bound
under the assumption that
$\gamma_{k+1}=M_{k+1}/ M=\gamma $, or equivalently, \eqref{ineq:gamma'} holds, for every $k \ge 0$.
In this case, $\theta_k = 1$ for every $k \ge 0$
and hence the convergence rate bound
in Theorem \ref{thm:main}(b) yields the
iteration-complexity bound
\[
{\cal O}_1
\left( 
  \frac{\gamma^{1/2} MD}{\hat \rho} +
 \frac{\gamma\bar mMD^2}{\hat \rho^2} \right)
\]
for AC-ACG,
which improves as $\gamma$ decreases. This contrasts with bound
\eqref{eq:bound}, which becomes worse as $\gamma$ decreases.

\section{Comparison with other accelerated type methods}\label{sec:comparison}
This section gives a brief overview of existing ACG methods for solving convex and nonconvex SCO problems. It contains three subsections. The first subsection reviews three ACG variants for solving C-SCO problems. The second one discusses pure ACG variants for solving N-SCO problems, i.e., ACG variants which perform only accelerated steps similar to the ones of the variants
of  the first subsection.
The third one discusses hybrid ACG variants which, in addition to accelerated composite gradient steps, may also perform unaccelerated  ones.

\subsection{Review of convex ACG methods}\label{Subsec:review}
This subsection reviews three ACG variants for solving C-SCO problems,
i.e., SCO problems of the form \eqref{eq:PenProb2Intro} where (A1)-(A3) hold with $m=0$, and
hence $ f $ is convex.
All the ACG methods reviewed here are described in
terms of the notation introduced in the AC-ACG method
or the ACG framework described below. This approach
has the advantage that all the ACG methods are viewed under the
same notation and hence their similarities/differences
become more apparent.

The accelerated gradient method for solving unconstrained
C-SCO problems (i.e., \eqref{eq:PenProb2Intro}
with $h=0$) were originally developed by Nesterov in his celebrated work \cite{ag_nesterov83}. Subsequently, several variants of his method (see for example \cite{auslender2006interior,beck2009fista,lan2011primal,MonteiroSvaiterAcceleration,nesterov1998,Nest05-1,nesterov2012gradient,tseng2008accmet}) have been developed for solving C-SCO problems.

Before reviewing  ACG variants for solving
C-SCO,  we first describe
a common ACG framework underlying them.


\noindent\rule[0.5ex]{1\columnwidth}{1pt}

ACG framework

\noindent\rule[0.5ex]{1\columnwidth}{1pt}

\begin{algorithmic}
	\STATE 0. Let an initial point $ y_0 \in \dom h$ be given, and set $ x_0=y_0 $, $ A_0=0 $ and $ k=0 $;
	\STATE 1. compute $a_k$, $A_{k+1}$ and $\tx_k$ as
	in \eqref{eq:aktx};
	\STATE 2. compute $x_{k+1}$ and $y_{k+1}$ using one
	of the rules listed below;
	\STATE 3. set $k \leftarrow k+1$, and go to step 1.
\end{algorithmic}
\noindent\rule[0.5ex]{1\columnwidth}{1pt}

We will now describe three possible rules for computing
the iterates
$x_{k+1}$ and $y_{k+1}$ in step 2 of the above framework.

\begin{itemize}
    \item[i)] (FISTA rule) 
    This rule sets $y^a_{k+1} = y(\tx_k;M_k)$
    where $y(\tx_k;M_k)$ is defined in \eqref{eq:update}
    and $M_k>0$ is a good upper curvature of $f$ at $\tx_k$,
    then chooses $y_{k+1}$ to be any point
    satisfying
    $ \phi(y_{k+1}) \le \phi(y^a_{k+1})$
    and computes $x_{k+1}$ as
    \begin{equation}\label{FISTA x}
	x_{k+1}
	=y_{k+1}^a+\frac{A_k}{a_k}\left(y_{k+1}^a-y_k\right).
    \end{equation}
    FISTA rule with $y_{k+1}=y^a_{k+1}$ was first introduced by Nesterov
    when $h$ is the indicator function
    of a nonempty closed convex set
    (see for example ``Constant Step Scheme, III" on pages 83-84 of \cite{nesterov1998}
    or ``Constant Step Scheme, II. Simple sets" on page 90 of \cite{nesterov2004})
    and was later extended to general composite
    closed convex functions in \cite{beck2009fast,beck2009fista}.
    \item[ii)] (AT rule)
    This rule computes
    $x_{k+1}$ as \eqref{eq:x+}
    and chooses $y_{k+1}$
    to be any point satisfying $ \phi(y_{k+1}) \le \phi(y^a_{k+1}) $ where
    \begin{equation}\label{non-FISTA y}
    y_{k+1}^a=\frac{A_ky_k + a_kx_{k+1}}{A_{k+1}}.
    \end{equation}
    This rule with $y_{k+1}=y_{k+1}^a$ was introduced by
    Auslender and Teboulle in \cite{auslender2006interior},
    which explains the  name ``AT"
    adopted here.
    \item[iii)] (LLM rule) 
    This rule sets
    $y_{k+1}$ as in the FISTA rule and 
    and $x_{k+1}$ as in the AT rule.
    LLM rule was introduced by Lu, Lan and Monteiro in \cite{lan2011primal}, which explains the name ``LLM" adopted here.
    
\end{itemize}

We now make a few remarks on the three ACG variants based on the above three rules. First, the ACG variant based on the LLM rule performs two resolvent evaluations of $ h $ per iteration, while the variants based on the AT and FISTA rules perform exactly one resolvent evaluation.
Second, two popular choices of
an upper curvature sequence $\{M_k\}$
are as follows:
1) for some $M \ge \bar M$, $M_k=M$ for every $k\ge0$; and
2) for every $k \ge 0$,
$ M_k $ is computed by a backtracking procedure
such as the one outlined in the second paragraph
of Section \ref{sec:intro}.
While \cite{auslender2006interior,lan2011primal,nesterov1998} consider only the first choice, \cite{beck2009fista,nesterov2012gradient}
analyze the FISTA variant for both  choices of $\{M_k\}$.
Third, the AC-ACG method studied in this paper uses the LLM rule and works with a sequence
$ \{M_k\} $ such that $M_k$ is not necessarily
a good upper curvature of $f$ at $\tx_k$.

We now comment on the monotonicity of the three
aforementioned ACG variants. The three ACG variants
based on the identity $ y_{k+1}=y_{k+1}^a $
are not necessarily monotone (i.e., it satisfies
$\phi(y_{k+1}) \le \phi(y_k)$ for every $k\ge 0$),
even if every $ M_k $ is a good upper curvature of $f$ at $\tx_k$.
However, they can be made monotone
by invoking an idea introduced in \cite{Nest05-1}
which sets 
$y_{k+1} =\argmin\{\phi(y):y\in\{y_k,y_{k+1}^a\}\}$,
where $y^{a}_{k+1}$ is as described in each of the rules
above.
Another alternative way of forcing monotonicity,
which requires an extra resolvent evaluation of $h$,
is to choose $y_{k+1}$ as
\begin{equation}\label{y+}
y_{k+1} =
\argmin\{\phi(y):y\in\{y_k,y_{k+1}^a,y_{k+1}^{na}\}\}
\end{equation}
where $ y_{k+1}^{na} =y(y_k;M_k^{na})$ and $ M_k^{na} $ is a good upper curvature of $f$ at $y_k$.
We remark that $y_k$ can actually be removed from
the right hand side of \eqref{y+}.
This is due to the fact that $M_k^{na}$ being a good upper curvature of $f$
at $y_k$ implies that
$\phi(y^{na}_{k+1}) \le \phi(y_k)$
in view of Lemma \ref{lem:descent} in the Appendix with $(M_k,\tx_k,y_{k+1})=(M_k^{na},y_k,y_{k+1}^{na})$.

\subsection{Pure accelerated variants}\label{subsec:pure}

This subsection discusses pure ACG variants for solving the N-SCO problem \eqref{eq:PenProb2Intro}.
More specifically, we discuss three methods, namely:
the AG method proposed in \cite{nonconv_lan16}, the NC-FISTA of \cite{liang2019fista},
and its adaptive variant ADAP-NC-FISTA also described in \cite{liang2019fista}.
The iteration-complexity of
all three methods are analyzed under the assumption that
$ \dom h $ is bounded, but in practice
all three methods can successfully solve many problems with unbounded $ \dom h $.


AG is a direct extension
of the ACG variant, based on the LLM rule and the constant choice of $M_k$, to
the N-SCO context.
Clearly, AG performs two resolvent evaluations of $ h $ per iteration.


NC-FISTA requires as input a pair $(M,m)$ such that $M > \bar M$ and $M\ge m \ge \bar m$.
It is an extension of the version of FISTA with $y_{k+1}=y^a_{k+1}$ from the C-SCO to the N-SCO context, and it reduces to the latter one when $m=\bar m=0$.
More specifically, NC-FISTA sets $ y_{k+1}=y(\tx_k;M_k)$ where
$ M_k=M+\kappa_0m/(Ma_k) $, and computes $ x_{k+1} $ as in \eqref{FISTA x} with $ A_k/a_k $ replaced by
$(\kappa_0 m/M + 1)^{-1}(A_k/a_k)$
where
$ \kappa_0 $ is a positive universal constant.
In contrast to an iteration of the AG method, every iteration of NC-FISTA performs exactly one resolvent evaluation of $ h $.

One drawback of NC-FISTA is its required
input pair $ (M,m) $, which is usually
hard to obtain or is often poorly estimated.
On the other hand, ADAP-NC-FISTA remedies this drawback in that it only requires as input an
arbitrary initial pair $(M_0,m_0) $ such
that $M_0\ge m_0>0$, which is dynamically updated by means of two separate backtracking search procedures.

\subsection{Hybrid accelerated variants}\label{subsec:hybrid}

This subsection discusses hybrid ACG variants for solving the N-SCO problem \eqref{eq:PenProb2Intro}.
More specifically, we discuss three methods, namely: a  non-monotone variant
as well as a monotone one both described in
 \cite{Li_Lin2015}, which we refer to as
NM-APG and M-APG, respectively,
and UPFAG proposed in \cite{LanUniformly}.
To the best of our knowledge, the convergence of these
hybrid ACG variants is guaranteed due to the possibility of performing an extra
unaccelerated composite gradient step. Whether their convergence can be shown without this optional step
is an open question even for the case in which $\dom h$ is bounded.




M-APG is exactly the instance of
the ACG variant based on the FISTA rule
in which $y_{k+1}$ is computed by means of
\eqref{y+} which, as already mentioned above,
guarantees its monotonicity property due to the
fact that $M_k^{na}$ is chosen as a good upper
curvature of $f$ at $y_k$.
NM-APG is a variant of M-APG, which
either sets $y_{k+1}=y_{k+1}^a$ or computes $y_{k+1}$ as in \eqref{y+} depending on whether or not,
respectively,
$y^a_{k+1}$ satisfies a key inequality, which
ensures convergence of the method
but not necessarily
its monotonicity.

UPFAG is an ACG variant based on the AT rule in which the next iterate $ y_{k+1} $ is chosen as in \eqref{y+} except that $(M_k^a,M_k^{na})$ is
computed by line searches so that
$M_k^a$ closely approximates a
good curvature of $f$ at
$\tx_k$ and $M_k^{na}$  satisfies
a relaxed version of the descent condition
\eqref{ineq:implication} with $(M_k,\tx_k,y_{k+1})=(M_k^{na},y_k,y_{k+1}^{na})$.

\section{Proof of Theorem \ref{thm:main}}\label{sec:proof}

This section presents  the proof of Theorem \ref{thm:main}. 
We start with the following technical result, which assumes that all sequences start with $k=0$.

%

\begin{lemma}\label{lem:tech1}
The following statements hold:
\begin{itemize}
\item[(a)]
the sequences $\{x_k\}$, $\{y_k\}$, $\{y_{k+1}^g\}$, $\{y_{k+1}^b\}$ and
$\{\tx_k\}$ are all contained in $\dom h$;
\item[(b)] for every $u \in \dom h$ and $k \ge 0$, we have
	\[
	A_k \| y_k - \tilde{x}_k \|^2 + a_k \| u - \tilde{x}_k\|^2 \le a_k D^2;
	\]
	\item[(c)] for every $k \ge 0$, $ C_k\le \bar M $ and $ F_k\le \bar M $, where
	\begin{equation}\label{eq:Fk}
	F_k:= {\cal C}(y_{k+1};\tx_k)
	\end{equation}
	and $ {\cal C}(\cdot;\cdot) $ is defined in \eqref{eq:cal C};
	\item[(d)] 
	for every $ k\ge0 $, 
	we have
	\begin{equation}\label{eq:optcond}
	v_{k+1}\in \nabla f(y^g_{k+1})+\partial h(y^g_{k+1}), \quad \|v_{k+1}\|\le (M_k+C_k)\|y^g_{k+1}-\tx_k\|.
	\end{equation}
\end{itemize}
\end{lemma}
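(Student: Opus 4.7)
The plan is to address the four parts in order, since each later part relies on the earlier ones, and to handle parts (a), (c), and (d) quickly as direct consequences of the definitions, while spending the most effort on the three-point identity that powers part (b).

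For part (a), I would argue by induction on $k$. The base case is immediate since $y_0=x_0 \in \dom h$ by hypothesis. For the inductive step, note that $\tx_k=\frac{A_k}{A_{k+1}}y_k+\frac{a_k}{A_{k+1}}x_k$ is a convex combination of two points in $\dom h$ and hence lies in $\dom h$ by convexity of the latter. Next, $y^g_{k+1}$ and $x_{k+1}$ are defined as unique minimizers of objectives that include the term $h(\cdot)$ in \eqref{eq:update} and \eqref{eq:x+} respectively, so they must belong to $\dom h$; and $y^b_{k+1}$ is again a convex combination of $y_k$ and $x_{k+1}$. The update rule \eqref{eq:ty} then guarantees $y_{k+1} \in \dom h$ in either branch, which closes the induction.

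For part (b), the key is the standard quadratic identity that for a convex combination $\tx_k=\frac{A_k}{A_{k+1}}y_k+\frac{a_k}{A_{k+1}}x_k$ and any $u \in \R^n$,
\[
\|u-\tx_k\|^2 = \frac{A_k}{A_{k+1}}\|u-y_k\|^2 + \frac{a_k}{A_{k+1}}\|u-x_k\|^2 - \frac{A_k a_k}{A_{k+1}^2}\|y_k-x_k\|^2,
\]
combined with the direct computation $\|y_k - \tx_k\|^2 = \frac{a_k^2}{A_{k+1}^2}\|y_k-x_k\|^2$. Multiplying the first by $a_k$, the second by $A_k$, and adding, the $\|y_k-x_k\|^2$ terms telescope (since $a_k A_k a_k^2/A_{k+1}^2$ cancels exactly with $A_k a_k^2/A_{k+1}^2$), leaving
\[
A_k\|y_k-\tx_k\|^2 + a_k\|u-\tx_k\|^2 = \frac{a_k A_k}{A_{k+1}}\|u-y_k\|^2 + \frac{a_k^2}{A_{k+1}}\|u-x_k\|^2.
\]
Using part (a) and assumption (A3) to bound both $\|u-y_k\|$ and $\|u-x_k\|$ by $D$, and then using $A_k+a_k=A_{k+1}$, yields the right-hand side $a_k D^2$. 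This is the step I expect to be the main obstacle, mostly because the algebraic cancellation is a little delicate and requires recognizing which identity to invoke.

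For part (c), apply \eqref{ineq:upper} and the second inequality in \eqref{ineq:curv} at $(u,u')=(y_{k+1}^g,\tx_k)$ and $(y_{k+1},\tx_k)$, which are valid pairs of points in $\dom h$ by part (a); this bounds each of the two quantities appearing inside the maximum in the definition \eqref{eq:C} by $\bar M$, giving $C_k \le \bar M$, and likewise bounds $F_k \le \bar M$ via \eqref{eq:Fk} and \eqref{eq:cal C}. Finally, for part (d), I would write down the first-order optimality condition for the subproblem \eqref{eq:update} defining $y^g_{k+1}=y(\tx_k;M_k)$, which gives $0 \in \nabla f(\tx_k) + M_k(y^g_{k+1}-\tx_k) + \partial h(y^g_{k+1})$; rearranging and adding $\nabla f(y^g_{k+1})$ to both sides produces exactly the quantity $v_{k+1}$ defined in \eqref{eq:vk} on the left, hence $v_{k+1} \in \nabla f(y^g_{k+1}) + \partial h(y^g_{k+1})$. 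The norm bound then follows from the triangle inequality applied to \eqref{eq:vk}, combined with the inequality $\|\nabla f(y^g_{k+1}) - \nabla f(\tx_k)\| \le C_k \|y^g_{k+1}-\tx_k\|$ which is built into the definition of $C_k$ in \eqref{eq:C}.
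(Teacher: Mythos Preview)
Your proposal is correct and follows essentially the same approach as the paper. The only cosmetic difference is in part (b): the paper applies the identity $A\|y\|^2+a\|x\|^2=(A+a)\|(Ay+ax)/(A+a)\|^2 + \tfrac{Aa}{A+a}\|y-x\|^2$ with $(y,x)=(y_k-\tx_k,\,u-\tx_k)$, whereas you use the equivalent variance-decomposition form of the same three-point identity; both routes land on the identical equality $A_k\|y_k-\tx_k\|^2+a_k\|u-\tx_k\|^2=\tfrac{a_k}{A_{k+1}}\bigl(A_k\|u-y_k\|^2+a_k\|u-x_k\|^2\bigr)$ before invoking (A3). (Minor note: in your parenthetical the coefficient you wrote as ``$a_k A_k a_k^2/A_{k+1}^2$'' should read $A_k a_k^2/A_{k+1}^2$, but the intended cancellation is clear and correct.)
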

\begin{proof}
	(a) The sequences $ \{x_k\}$ and $ \{y_{k+1}^g\} $ are contained in $ \dom h $ in view of \eqref{eq:x+},
	 \eqref{eq:update} and step 0 of AC-ACG.
	 Hence, using step 0 of AC-ACG again,  \eqref{eq:ty} and the convexity of $\dom h$, we easily
	 see by induction that $\{y_k\}$ and $\{y^b_{k+1}\}$ are contained in
	 $ \dom h$. Finally, $\{\tx_k\} \subset \dom h$ follows from the third identity in \eqref{eq:aktx} and the convexity of $\dom h$.
	
	(b) Let $u \in \dom h$ and $k \ge 0$ be given. First note that for every $ A, a \in \R_+ $ and $ x, y\in \R^n $, we have
	\[
	A\|y\|^2+a\|x\|^2=(A+a)\left\| \frac{Ay+ax}{A+a}\right\|^2 + \frac{Aa}{A+a}\|y-x\|^2.
	\]
	Applying the above identity with $ A=A_k $, $ a=a_k $, $ y=y_k-\tx_k $ and $ x = u-\tx_k $, and using both the second and the third identities in \eqref{eq:aktx}, we have
	\begin{align*}
	A_k \| y_k - \tilde{x}_k \|^2 + a_k \| u - \tilde{x}_k\|^2  
	 &= A_{k+1}\left \| \frac{A_k y_k +a_k u}{A_{k+1}} -\tx_k\right \|^2 + \frac{A_ka_k}{A_{k+1}}\|y_k-u\|^2 \\
	 &=   \frac{a_k}{A_{k+1}}\left( a_k\|u-x_k\|^2 + A_k\|u-y_k\|^2\right) \le a_k  D^2
	\end{align*}
	where the inequality follows from  Lemma \ref{lem:tech1}(a), the assumption that $u \in \dom h$,
	the definition of $ D $ in (A3), and the second equality in \eqref{eq:aktx}.

	(c) The conclusion follows from definitions of $ C_k $, $ F_k $ and $ {\cal C}(\cdot;\cdot) $ in \eqref{eq:C}, \eqref{eq:Fk} and \eqref{eq:cal C}, respectively, and the fact that $ \bar M $ satisfies both the second inequality in \eqref{ineq:curv} and \eqref{ineq:upper}.
	
	(d) The inclusion in \eqref{eq:optcond} follows from the fact $y_{k+1}=y(\tx_k;M_k)$, the optimality condition of \eqref{eq:update} and the definition
	of $ v_{k+1} $ in \eqref{eq:vk}. Moreover, the inequality in \eqref{eq:optcond} follows from
	 definitions of $ C_k $ in \eqref{eq:C} and $v_{k+1}$, and the triangle inequality.
%
\end{proof}

The next result provides an important recursive formula involving
a certain potential function $\eta_k$ and the quantity $\|y_{k+1}-\tx_k\|$ that will later be related to
the residual vector $\|v_{k+1}\|$ (see the proof of Lemma \ref{lem:basic}(a)).

\begin{lemma}\label{lem:tech2}
	For every $ k\ge 0 $ and $u \in \dom h$, we have
	\[
	\frac{ M_k-F_k}{2}A_{k+1}\|y_{k+1}-\tx_k\|^2
	\le \eta_k(u)- \eta_{k+1}(u)+\frac{1}{2}  \bar m a_k D^2
	\]
	where $M_k$ and $ F_k $ are as in \eqref{eq:M} and \eqref{eq:Fk}, respectively, and
	\begin{equation} \label{eq:eta}
	\eta_k(u):=A_k(\phi(y_k)-\phi(u))+\frac12\|u-x_k\|^2.
	\end{equation}
\end{lemma}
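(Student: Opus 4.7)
The plan is to treat both branches of the $y_{k+1}$ selection rule \eqref{eq:ty} uniformly and to derive the target inequality from four ingredients: (i) the identity $f(y_{k+1})=\ell_f(y_{k+1};\tx_k)+(F_k/2)\|y_{k+1}-\tx_k\|^2$, which is immediate from the definition of $F_k$ in \eqref{eq:Fk}; (ii) the $1$-strong convexity of the objective in the $x_{k+1}$-subproblem \eqref{eq:x+}; (iii) the weak-convexity inequality $\ell_f(u;\tx_k)\le f(u)+(\bar m/2)\|u-\tx_k\|^2$, equivalent to the first inequality in \eqref{ineq:curv}; and (iv) the bound of Lemma \ref{lem:tech1}(b) used at the end to absorb the $\bar m$-terms into $\bar m a_k D^2/2$.

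The key preliminary step, and the main technical obstacle, is to prove that \emph{regardless of which branch of \eqref{eq:ty} is taken}, the unified inequality
\[
A_{k+1}\bigl[\ell_f(y_{k+1};\tx_k)+h(y_{k+1})\bigr]+\frac{M_k A_{k+1}}{2}\|y_{k+1}-\tx_k\|^2 \le A_k\bigl[\ell_f(y_k;\tx_k)+h(y_k)\bigr]+a_k\bigl[\ell_f(x_{k+1};\tx_k)+h(x_{k+1})\bigr]+\frac12\|x_{k+1}-x_k\|^2
\]
holds. For the $y_{k+1}=y^b_{k+1}$ branch, combining the third identity in \eqref{eq:aktx} with the definition of $y^b_{k+1}$ in \eqref{eq:ty} yields the crucial identity $A_{k+1}(y^b_{k+1}-\tx_k)=a_k(x_{k+1}-x_k)$, and this together with \eqref{eq:rel} gives $(M_k A_{k+1}/2)\|y^b_{k+1}-\tx_k\|^2=\tfrac12\|x_{k+1}-x_k\|^2$; the linear-combination piece follows from the convexity of $h$ and the affinity of $\ell_f(\cdot;\tx_k)$ applied to $y^b_{k+1}=(A_k y_k+a_k x_{k+1})/A_{k+1}$. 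For the $y_{k+1}=y^g_{k+1}$ branch, the minimality of $y^g_{k+1}$ in \eqref{eq:update} yields $\ell_f(y^g_{k+1};\tx_k)+h(y^g_{k+1})+(M_k/2)\|y^g_{k+1}-\tx_k\|^2 \le \ell_f(y^b_{k+1};\tx_k)+h(y^b_{k+1})+(M_k/2)\|y^b_{k+1}-\tx_k\|^2$, which, after multiplication by $A_{k+1}$, reduces this case to the $y^b$ estimate already handled.

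Given this unified bound, I would next substitute ingredient (i) on the left, which converts the $M_k$-term into $(M_k-F_k)A_{k+1}\|y_{k+1}-\tx_k\|^2/2$ plus $A_{k+1}\phi(y_{k+1})$. I would then invoke the strong-convexity estimate from the subproblem in \eqref{eq:x+} to replace $a_k[\ell_f(x_{k+1};\tx_k)+h(x_{k+1})]+\tfrac12\|x_{k+1}-x_k\|^2$ on the right by $a_k[\ell_f(u;\tx_k)+h(u)]+\tfrac12\|u-x_k\|^2-\tfrac12\|u-x_{k+1}\|^2$, and apply ingredient (iii) separately at $y_k$ and at $u$ to pass from $\ell_f$ to $\phi$, paying the prices $(A_k\bar m/2)\|y_k-\tx_k\|^2$ and $(a_k\bar m/2)\|u-\tx_k\|^2$. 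Finally, Lemma \ref{lem:tech1}(b) collapses $A_k\|y_k-\tx_k\|^2+a_k\|u-\tx_k\|^2$ into $a_k D^2$, and using $A_{k+1}=A_k+a_k$ to form the $\phi(u)$-discrepancies on both sides together with the definition of $\eta_k$ in \eqref{eq:eta} brings the inequality into the asserted form after moving $(M_k-F_k)A_{k+1}\|y_{k+1}-\tx_k\|^2/2$ to the left.
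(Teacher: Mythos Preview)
Your proof is correct and follows essentially the same route as the paper's. The paper defines $\gamma_k(u):=\ell_f(u;\tx_k)+h(u)$ and chains the same four ingredients in a slightly different order---first the strong-convexity estimate for the $x_{k+1}$-subproblem, then the convex-combination bound yielding $A_{k+1}\gamma_k(y_{k+1}^b)+(M_kA_{k+1}/2)\|y_{k+1}^b-\tx_k\|^2$, then the minimality of $y_{k+1}^g$ in \eqref{eq:update} to pass from $y_{k+1}^b$ to $y_{k+1}$ (handling both branches at once, exactly as you do), then the $F_k$ identity, the weak-convexity inequality, and Lemma~\ref{lem:tech1}(b)---so the two arguments differ only in presentation.
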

\begin{proof}
Let $k \ge 0$ and $u \in \dom h$ be given and define
$ \gamma_k(u):=\ell_f(u;\tx_k)+h(u) $.
	Using the fact $ x_{k+1} $ is an optimal solution of \eqref{eq:x+} and $\gamma_k$ is a convex function, the second and third identities in \eqref{eq:aktx}, and
relations \eqref{eq:ty} and \eqref{eq:rel}, we conclude that
	\begin{align*}
	A_k\gamma_k(y_k)+a_k\gamma_k(u)+\frac{1}{2}\|u-x_k\|^2-\frac{1}{2}\|u-x_{k+1}\|^2 
	\ge& A_k\gamma_k(y_k)+a_k\gamma_k(x_{k+1})+\frac{1}{2}\|x_{k+1}-x_k\|^2 \nn \\
	\ge& A_{k+1}\gamma_k(y_{k+1}^b)+\frac{1}{2}\frac{A_{k+1}^2}{a_k^2}\|y_{k+1}^b-\tx_k\|^2 \nn \\
	=& A_{k+1}\left[ \gamma_k(y_{k+1}^b)+\frac{ M_k}{2}\|y_{k+1}^b-\tx_k\|^2\right].
	\end{align*}
Moreover, relations \eqref{eq:update}, \eqref{eq:ty} and \eqref{eq:Fk}, and the fact that
$\{y_k^b\} \subset \dom h$ imply that
\[
\gamma_k(y_{k+1}^b)+\frac{ M_k}{2}\|y_{k+1}^b-\tx_k\|^2
\ge \gamma_k(y_{k+1})+\frac{ M_k}{2}\|y_{k+1}-\tx_k\|^2 = \phi(y_{k+1})+\frac{ M_k-F_k}{2}\|y_{k+1}-\tx_k\|^2.
\]
	Using the above two inequalities, the definition of $ \eta_k $ in \eqref{eq:eta} and the first inequality in \eqref{ineq:curv}, we easily see that
	\[
	\begin{aligned}
	\frac{ M_k-F_k}{2}A_{k+1}\|y_{k+1}-\tx_k\|^2-\eta_k(u)+ \eta_{k+1}(u)
	&\le A_k(\gamma_k(y_k)-\phi(y_k))+a_k(\gamma_k(u)-\phi(u))\\
	&\le \frac{\bar m}{2}\left( A_k \| y_k - \tx_k \|^2 + a_k\| u - \tx_k\|^2\right),
	\end{aligned}
	\]
	which, together with Lemma \ref{lem:tech1}(b), then immediately implies the lemma.
\end{proof}

For the purpose of stating the next results, we define the set of good and bad iterations as
\begin{equation}\label{def:G}
{\cal G} := \{k\ge0: C_k\le 0.9M_k\}, \quad {\cal B}:=\{k\ge0: C_k> 0.9M_k\},
\end{equation}
respectively.
The following result specializes the bound derived in Lemma \ref{lem:tech2} to the two exclusive cases
in which $k \in {\cal G}$ and $k \in {\cal B}$. More specifically, it derives a controllable
bound on the residual vector $v_{k+1}$ and the potential function
difference $\eta_{k+1}(u) - \eta_k(u)$ in the good iterations
and a controllable bound only on $\eta_{k+1}(u) - \eta_k(u)$ in the bad iterations.

\begin{lemma}\label{lem:basic}
	The following statements hold for every $ u\in\dom h $ and $ k\ge 0 $:
	\begin{itemize}
		\item[(a)] if $k \in {\cal G}$ then
		\begin{equation}\label{ineq:case1}
		\frac{A_{k+1}}{72.2M_k}\|v_{k+1}\|^2\le \eta_k(u)- \eta_{k+1}(u)+\frac12 \bar m a_k D^2;
		\end{equation}
		\item[(b)] if $k \in {\cal B}$ then
		\begin{equation}\label{ineq:case2}
		0\le \eta_k(u)- \eta_{k+1}(u)+\frac1{2} \bar m a_k  D^2 + \frac{1-\gamma}{2\gamma}D^2.
		\end{equation}
	\end{itemize}
\end{lemma}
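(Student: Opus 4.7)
My plan is to derive both parts from the master inequality of Lemma \ref{lem:tech2}, namely
\[
\frac{M_k-F_k}{2}A_{k+1}\|y_{k+1}-\tx_k\|^2 \le \eta_k(u)-\eta_{k+1}(u)+\frac{1}{2}\bar m a_k D^2,
\]
by specializing to each of the two complementary cases and controlling the sign and magnitude of the left-hand side accordingly.

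For part (a), I would first note that rule \eqref{eq:ty} forces $y_{k+1}=y_{k+1}^g$ whenever $k\in{\cal G}$; hence, by definitions \eqref{eq:C} and \eqref{eq:Fk} together with the assumption $k\in{\cal G}$, one has $F_k={\cal C}(y_{k+1}^g;\tx_k)\le C_k\le 0.9 M_k$, so $M_k-F_k\ge 0.1 M_k$. The master inequality then reduces to
\[
\frac{0.1 M_k}{2}A_{k+1}\|y_{k+1}^g-\tx_k\|^2 \le \eta_k(u)-\eta_{k+1}(u)+\frac{1}{2}\bar m a_k D^2.
\]
To reintroduce the residual, I would invoke Lemma \ref{lem:tech1}(d) together with $C_k\le 0.9 M_k$ to obtain $\|v_{k+1}\|\le (M_k+C_k)\|y_{k+1}^g-\tx_k\|\le 1.9 M_k\|y_{k+1}^g-\tx_k\|$, so that $\|y_{k+1}^g-\tx_k\|^2\ge \|v_{k+1}\|^2/(3.61 M_k^2)$. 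Substituting and simplifying $0.1/(2\cdot 3.61)=1/72.2$ would yield \eqref{ineq:case1}.

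For part (b), the essential difficulty is that when $k\in{\cal B}$ the factor $M_k-F_k$ may be negative, so the master inequality is no longer informative on its own; one must bound the negative tail explicitly. The key step is to exploit the special form of $y_{k+1}^b$: since $y_{k+1}=y_{k+1}^b=(A_k y_k+a_k x_{k+1})/A_{k+1}$ by \eqref{eq:ty} and $\tx_k=(A_k y_k+a_k x_k)/A_{k+1}$ by \eqref{eq:aktx}, subtraction telescopes to $y_{k+1}^b-\tx_k=(a_k/A_{k+1})(x_{k+1}-x_k)$. Combining this with the relation $A_{k+1}=M_k a_k^2$ from \eqref{eq:rel} produces
\[
A_{k+1}\|y_{k+1}^b-\tx_k\|^2=\frac{\|x_{k+1}-x_k\|^2}{M_k}\le \frac{D^2}{M_k},
\]
where the bound uses $\{x_k\}\subset\dom h$ from Lemma \ref{lem:tech1}(a) together with assumption (A3). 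Finally, Lemma \ref{lem:tech1}(c) yields $F_k\le\bar M\le M$, while $M_k\ge\gamma M$ by construction, so $(F_k-M_k)/M_k\le (1-\gamma)/\gamma$; hence the left-hand side of the master inequality is bounded below by $-[(1-\gamma)/(2\gamma)]D^2$, and rearrangement delivers \eqref{ineq:case2}.

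The main obstacle is precisely this cancellation in part (b): without both the telescoping identity that makes $y_{k+1}^b-\tx_k$ proportional to $x_{k+1}-x_k$ and the uniform lower bound $M_k\ge\gamma M$ built into step 0 of AC-ACG, there would be no way to absorb the potentially negative tail into a constant multiple of $D^2$.
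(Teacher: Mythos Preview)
Your proof is correct and follows essentially the same route as the paper's: both parts are obtained by specializing Lemma~\ref{lem:tech2}, with (a) using $F_k\le C_k\le 0.9M_k$ together with the residual bound $\|v_{k+1}\|\le 1.9M_k\|y_{k+1}^g-\tx_k\|$, and (b) using the telescoping identity $y_{k+1}^b-\tx_k=(a_k/A_{k+1})(x_{k+1}-x_k)$, relation~\eqref{eq:rel}, and the bounds $F_k\le\bar M\le M$, $M_k\ge\gamma M$ to control the negative tail by $(1-\gamma)D^2/(2\gamma)$. The only cosmetic difference is that the paper first isolates the factor $\tfrac12(1-F_k/M_k)\|x_{k+1}-x_k\|^2$ and then bounds $\|x_{k+1}-x_k\|\le D$ at the very end, whereas you apply the diameter bound one step earlier.
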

\begin{proof}
	(a) Let $ k\in{\cal G}$ be given and note that  \eqref{def:G} and \eqref{eq:ty} imply that
	$0.9 M_k \ge C_k$ and
	$y_{k+1}=y_{k+1}^g$ where $y_{k+1}^g=y(\tx_k;M_k)$ is as in \eqref{eq:update}. Hence, using the inequality in \eqref{eq:optcond}, and the definitions of $C_k$ and $F_k$ in \eqref{eq:C} and \eqref{eq:Fk}, respectively, we conclude that $ \|v_{k+1}\|\le 1.9M_k \|y_{k+1}-\tx_k\| $ and $ F_k\le C_k \le 0.9M_k $. The latter two conclusions and Lemma \ref{lem:tech2} then immediately imply that
\eqref{ineq:case1} holds.

	(b) Let $ k\in{\cal B} $ be given and note  that  \eqref{eq:ty} and \eqref{def:G} imply that
	$y_{k+1}=y_{k+1}^b$. 
	Using the latter observation, Lemma \ref{lem:tech2}, Lemma \ref{lem:tech1}(c), the last equality in \eqref{eq:aktx}, and relation \eqref{eq:rel},
we conclude that
	\begin{align*}
      \eta_k(u)- \eta_{k+1}(u)+\frac{1}{2} \bar m a_k  D^2 &\ge
      \frac{(M_k-F_k)}{2}A_{k+1}\|y_{k+1}^b-\tx_k\|^2 \\
     &= \frac{(M_k-F_k)}{2}A_{k+1}\left \| \frac{A_ky_k+a_k x_{k+1}}{A_{k+1}} -  \frac{A_ky_k+a_k x_{k}}{A_{k+1}} \right \|^2 \\
&= \frac{(M_k-F_k) a_k^2}{2A_{k+1} }\|x_{k+1}-x_k\|^2 = \frac12 \left(1 - \frac{F_k}{M_k} \right)
\|x_{k+1}-x_k\|^2\\
&\ge \frac12 \left( 1 - \frac{1}{\gamma} \right) \|x_{k+1}-x_k\|^2,
	\end{align*}
     and hence that \eqref{ineq:case2} holds  in view of Lemma \ref{lem:tech1}(a) and (A3).
	\end{proof}

As a consequence, the next lemma provides the result of the summation of inequalities for $ k\in{\cal G} $ and $ k\in{\cal B} $ in Lemma \ref{lem:basic}.
\begin{lemma}\label{lem:sum}
	For every $ u\in\dom h $ and $ k\ge 1 $, we have
\begin{equation}\label{ineq:sum}
	\left( \frac{1}{36.1}\sum_{i\in {\cal G}_k}\frac{A_{i+1}}{M_i}\right)\min_{1\le i\le k}\|v_i\|^2
	 \le \|u-x_0\|^2-2\eta_k(u) + \bar mD^2A_k +\frac{1-\gamma}{\gamma}D^2|{\cal B}_k| ,
\end{equation}
where ${\cal G}_k$ and ${\cal B}_k$ are defined as
\begin{equation}
{\cal G}_k=\{i \in {\cal G} : i \le k-1 \}, \quad {\cal B}_k:=\{i \in {\cal B} : i\le k-1 \}.  \label{def:Gk}
\end{equation}
\end{lemma}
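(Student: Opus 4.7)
The plan is to simply aggregate the two one-step inequalities from Lemma \ref{lem:basic} across the indices $i = 0, 1, \ldots, k-1$, splitting the sum according to whether $i \in {\cal G}_k$ or $i \in {\cal B}_k$. Concretely, first I would multiply both \eqref{ineq:case1} and \eqref{ineq:case2} by $2$, so that the coefficient $1/72.2$ becomes $1/36.1$ (matching the target) and each right-hand side acquires the clean telescoping term $2[\eta_i(u) - \eta_{i+1}(u)]$.

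Next I would sum over $i = 0, \ldots, k-1$, using \eqref{ineq:case1} on $i \in {\cal G}_k$ and \eqref{ineq:case2} on $i \in {\cal B}_k$. The $\eta$-differences telescope to $2\eta_0(u) - 2\eta_k(u)$, and since $A_0 = 0$ and $x_0 = y_0$, the definition \eqref{eq:eta} gives $2\eta_0(u) = \|u - x_0\|^2$. The sum $\sum_{i=0}^{k-1} a_i$ telescopes to $A_k - A_0 = A_k$ via the second identity in \eqref{eq:aktx}, producing the term $\bar m D^2 A_k$. The constants $\frac{1-\gamma}{2\gamma} D^2$ arising from the bad iterations contribute $\frac{1-\gamma}{\gamma} D^2 |{\cal B}_k|$ after doubling and summing. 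Putting this together yields
\[
\sum_{i \in {\cal G}_k} \frac{A_{i+1}}{36.1\, M_i} \|v_{i+1}\|^2 \le \|u - x_0\|^2 - 2\eta_k(u) + \bar m D^2 A_k + \frac{1-\gamma}{\gamma} D^2 |{\cal B}_k|.
\]

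Finally I would lower-bound the left-hand side. Since $\{v_{i+1} : i \in {\cal G}_k\} \subseteq \{v_j : 1 \le j \le k\}$, we have $\|v_{i+1}\|^2 \ge \min_{1 \le j \le k} \|v_j\|^2$ for every $i \in {\cal G}_k$, so factoring this minimum out of the sum produces exactly the left-hand side of \eqref{ineq:sum}. This completes the argument.

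There is no real obstacle here — the lemma is essentially a telescoping bookkeeping step on top of Lemma \ref{lem:basic}. The only minor care points are the factor-of-two adjustment (to convert $1/72.2$ into $1/36.1$ while producing $\|u-x_0\|^2$ and $-2\eta_k(u)$ with the correct coefficients), and making sure the minimum on the left is taken over the full index range $1 \le i \le k$ rather than just $\{i+1 : i \in {\cal G}_k\}$, which is valid precisely because the minimum over a superset is no larger.
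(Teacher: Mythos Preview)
Your proposal is correct and follows essentially the same approach as the paper's proof: both sum the good/bad one-step inequalities of Lemma~\ref{lem:basic} over $i=0,\ldots,k-1$, telescope the $\eta$-terms (using $A_0=0$ so that $2\eta_0(u)=\|u-x_0\|^2$) and the $a_i$'s (using $\sum_{i=0}^{k-1}a_i=A_k$), and then replace each $\|v_{i+1}\|^2$ by the minimum over $1\le i\le k$. The only cosmetic difference is that you multiply by $2$ at the outset whereas the paper does so implicitly when writing the final bound.
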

\begin{proof}
First, note that 
\[
\sum_{i\in {\cal G}_k}\frac{A_{i+1}}{M_i} \|v_{i+1}\|^2
\ge \left( \sum_{i\in {\cal G}_k}\frac{A_{i+1}}{M_i}\right)\min_{i\in {\cal G}_k}\|v_{i+1}\|^2
\ge \left( \sum_{i\in {\cal G}_k}\frac{A_{i+1}}{M_i}\right)\min_{1\le i\le k}\|v_i\|^2.
\]
The conclusion follows by  adding \eqref{ineq:case1} and \eqref{ineq:case2} both with $k=i$ as
$i$ varies in ${\cal G}_k$ and ${\cal B}_k$, respectively,
and using the above inequality, the definition of $\eta_k$ in \eqref{eq:eta}, and
the facts that $A_k=A_0+ \sum_{i=0}^{k-1}a_i$  and $A_0=0$, which are due to \eqref{eq:aktx} and
 step 0 of the AC-ACG method, respectively.
\end{proof}
Note that the left hand side of \eqref{ineq:sum} is actually  zero when ${\cal G}_k=\emptyset$, and hence 
\eqref{ineq:sum} is meaningless in this case.
The result below, which plays a major role in our analysis, uses for the first time the fact that $M_k$
is chosen as in \eqref{eq:M}  and shows that
${\cal G}_k$ is nonempty and well-populated.
This fact in turn implies that
the term inside the parenthesis in the left hand side of \eqref{ineq:sum} is sufficiently large
(see Lemma \ref{lem:est3} below). The proof of Theorem \ref{thm:main} will then follow by combining these observations.

\begin{lemma}\label{lem:card}
For every $k \ge 12$,  $|{\cal B}_k |\le k/3$
where ${\cal B}_k$ is as defined in \eqref{def:Gk}.
\end{lemma}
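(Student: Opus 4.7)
The plan is to reduce the lemma to the following core inequality: if $N \ge 1$ and $N \in {\cal B}$, then $|{\cal B}_N| < N/8$. Given this, a short induction on $k$ yields $|{\cal B}_k| \le k/3$ for all sufficiently large $k$ (the threshold $k = 12$ being a loose safety margin). The inductive step is trivial when $k \notin {\cal B}$; when $k \in {\cal B}$, the core inequality gives $|{\cal B}_{k+1}| < k/8 + 1 \le (k+1)/3$ whenever $k \ge 4$ (equivalent to $5k \ge 16$), while small values of $k$ are handled directly by the core inequality (for example, $|{\cal B}_3| \le 1$ because the core inequality forces $|{\cal B}_N| = 0$ for any $N \in \{1,2,3\} \cap {\cal B}$, leaving at most the single index $0$ in ${\cal B} \cap \{0,1,2\}$).

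To prove the core inequality, I would exploit that each $i \in {\cal B}$ satisfies $C_i > 0.9 M_i \ge 0.9 \gamma M$, since $M_i \ge \gamma M$ by \eqref{eq:M} and step 0 of AC-ACG. Summing over $i \in {\cal B}_N$ gives $S_N := \sum_{j=0}^{N-1} C_j > 0.9 \gamma M \, |{\cal B}_N|$. The definition of $M_N$ in \eqref{eq:M}, together with the identity $C_{N-1}^{avg} = S_N/N$, then translates this into the lower bound $M_N \ge S_N/(\alpha N) > \beta \gamma M \, |{\cal B}_N|/N$, where $\beta := 0.9/\alpha$. Meanwhile, $N \in {\cal B}$ combined with Lemma \ref{lem:tech1}(c) and $\bar M \le M$ (step 0 of AC-ACG) gives the upper bound $M_N < \bar M/0.9 \le M/0.9$. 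Comparing the two bounds on $M_N$ produces $|{\cal B}_N|/N < 1/(0.9 \beta \gamma)$; an explicit computation from \eqref{eq:alpha} yields $0.9 \beta \gamma = 7.2 \gamma + 8 > 8$, delivering $|{\cal B}_N| < N/8$.

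The main obstacle is the derivation of the lower bound $M_N > \beta \gamma M \, |{\cal B}_N|/N$; this is precisely where the specific structure of the adaptive rule \eqref{eq:M} enters in an essential way, as anticipated in the paragraph preceding the lemma. The constants in \eqref{eq:alpha} are calibrated precisely so that $0.9 \beta \gamma$ comfortably exceeds $8$, which is more than the factor $3$ required by the $1/3$ target; once this key bound is in hand the rest is routine bookkeeping.
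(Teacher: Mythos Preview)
Your argument is correct and in fact considerably more direct than the one in the paper. The essential difference is this: the paper sums, over all $i\in{\cal B}_k$, the inequality $(\alpha/0.9)C_i>C_{i-1}^{avg}$, lower bounds the right side by a telescoping/partial-sum argument, splits the bad curvatures into halves $S_1,S_2$, and finally uses the crude ratio bound $S_2/S_1\le 1/(0.9\gamma)$ to conclude $|{\cal B}_k|\le (2\alpha k/0.9)(1+1/(0.9\gamma))+1$. You instead fix a single bad index $N\in{\cal B}$, bound $M_N$ from above by $M/0.9$ (via $C_N>0.9M_N$, $C_N\le\bar M\le M$) and from below by $0.9\gamma M\,|{\cal B}_N|/(\alpha N)$ (via $M_N\ge C_{N-1}^{avg}/\alpha$ and $C_j\ge 0.9\gamma M$ for each $j\in{\cal B}_N$), and compare.

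Your route buys a cleaner proof and a sharper constant (you get $|{\cal B}_N|<N/8$, whereas the paper's split argument only reaches $k/4+1$), and incidentally shows the lemma already holds for $k\ge 5$. What the paper's more elaborate argument buys is the structure needed for the practical discussion following the lemma: the half-split makes the quantity $\bar\gamma_k=S_2^k/S_1^k$ explicit, and the paper argues that in practice $\bar\gamma_k={\cal O}(1)$ rather than $1/(0.9\gamma)$, which justifies choosing $\alpha$ much more aggressively than \eqref{eq:alpha}. That motivation is invisible in your streamlined proof. Also, your ``induction'' is really a direct argument in disguise (the case $k\in{\cal B}$ never uses the induction hypothesis); it would read more cleanly to simply take $N=\max{\cal B}_k$ and apply your core inequality once.
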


\begin{proof}
Let $k \ge 12$ be given and,
for the sake of this proof,
define $ C_{-1}^{avg}= 0 $.
In view of  \eqref{eq:M} and the definition of ${\cal B}_k$ in \eqref{def:Gk}, it follows that for every $i \in {\cal B}_k$,
\[
\frac{\alpha}{0.9} C_{i}> \alpha M_{i}\ge C_{i-1}^{avg}, 
\]
and hence that
\begin{align}\label{ineq:sum C}
	\frac{\alpha}{0.9} \sum_{i \in {\cal B}_k} C_i > 
\sum_{i \in {\cal B}_k} C^{avg}_{i-1}.
	\end{align}
Using Lemma \ref{lem:tech1}(c) and the facts that $ C_i>0.9M_i $ for every $ i\in{\cal B}_k $ and that $ M_i \ge \gamma M\ge \gamma \bar M $ (see \eqref{eq:M} and step 0 of the AC-ACG method) for every $ i\ge 0 $, we have 
\begin{equation}\label{ineq:lb}
0.9\gamma \bar M\le C_{i}\le \bar M \quad i\in{\cal B}_k .
\end{equation}
Let $l:=|{\cal B}_k|$ and let $i_1< \cdots < i_l$ denote the indices in ${\cal B}_k$.
	Clearly, in view of \eqref{eq:Cave} and the fact that $i_j \le k$ for every $j=1,\ldots,l$, we have
	\begin{align*}
	C^{avg}_{i_1-1}\ge 0, \quad
	C^{avg}_{i_2-1}\ge \frac1{k}  C_{i_1}, \quad \cdots\cdots, \quad
	C^{avg}_{i_l-1}\ge \frac{1}{k}\left( C_{i_1}+\cdots+C_{i_{l-1}} \right).
	\end{align*}
	Summing these inequalities, we obtain
	\begin{align*}
	 \sum_{i \in {\cal B}_k} C^{avg}_{i-1} \ge
	\frac{ 1}{k} \sum_{j=1}^{l}  (l-j) C_{i_j}\ge \frac{ 1}{k} \sum_{j=1}^{\lceil l/2 \rceil}  (l-j) C_{i_j}\ge \frac{1}{k}
	\left \lfloor \frac{l}2 \right \rfloor \sum_{j=1}^{\lceil l/2 \rceil} C_{i_j}.
	\end{align*}
	 Combining \eqref{ineq:sum C} and the last inequality, we then conclude that
	\[
	\frac{\alpha( S_1+ S_2)}{0.9} \ge \frac{1}{k} \left \lfloor \frac{l}2 \right \rfloor S_1
	\]
	where
	\begin{equation}\label{eq:S}
	S_1:= \sum_{j=1}^{\lceil l/2 \rceil} C_{i_j}, \quad S_2:=\sum_{j=\lceil l/2 \rceil+1}^{l} C_{i_j}.
	\end{equation}
 Since \eqref{ineq:lb} and the above definitions of $S_1$ and $S_2$ immediately imply
 that $S_2/S_1 \le 1/(0.9\gamma)$, we then conclude from the above inequality that
\begin{equation}\label{ineq:l1}
	|{\cal B}_k | =  l\le \left( \frac{2 \alpha k}{0.9} \right) \left( 1 + \frac{S_2}{S_1} \right) + 1 \le
	\left( \frac{2 \alpha k}{0.9} \right) \left( 1 + \frac1{0.9\gamma} \right) + 1
\end{equation}
and hence that  $|{\cal B}_k |\le k/4 +1\le k/3$ in view of the definition of $ \alpha $ in \eqref{eq:alpha} and the fact that $ k\ge 12 $.
The last conclusion of the lemma follows straightforwardly from the first one.
\end{proof}

We now make some remarks about choosing $\alpha$ more aggressively,
i.e., larger than the value in \eqref{eq:alpha}
(recall the discussion in the second paragraph following the AC-ACG method).
First, in view of their definitions in \eqref{eq:S},
the quantities $S_1$ and $S_2$ are actually quantities that
depend on the iteration index $k$ and hence should have been denoted by
$S_1^k$ and $S_2^k$. Second, it follows from the first inequality
in \eqref{ineq:l1} that
\[
|{\cal B}_k | \le \left( \frac{2 \alpha k}{0.9} \right)
\left( 1 + \bar \gamma_k \right) + 1 
\]
where $\bar \gamma_k := S_2^k/S_1^k$.
Third, we have used in the proof of Lemma \ref{lem:card} that  $\bar \gamma_k$
is bounded above by $1/(0.9 \gamma)$,
which is a very conservative bound for this quantity.
In practice though, $\bar \gamma_k$ behaves as ${\cal O}(1)$
(if not for all $k$, then at least for a substantial number of iterations). Fourth, in order to conclude
that $|B_k| \le k/3$ as in the proof of Lemma \ref{lem:card},
it suffices to choose
\[
\alpha = \frac{0.9}{8(1+\bar \gamma)}
\]
where $\bar \gamma := \max\{ \bar \gamma_k : k \ge 1\}$.
Observe that the above choice of $\alpha$ is
$\Theta(1)$ if $\bar \gamma$ behaves as ${\cal O}(1)$.

Before presenting Lemma \ref{lem:est3}, we first state two technical results about the
sequences $\{M_k\}$ and $\{A_k\}$.

\begin{lemma}\label{lem:avg}
	For every $1 \le i<k$, we have
	\[
	M_k \ge \frac{i}{k} M_i.
	\]
\end{lemma}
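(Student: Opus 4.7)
The plan is to perform a short case analysis based on which of the two terms in the max defining $M_i$ (see \eqref{eq:M}) is the larger. Recall from step 0 and \eqref{eq:M} that $M_0 = \gamma M$ and that for every $j \ge 1$,
\[
M_j = \max\left\{\frac{1}{\alpha j}\sum_{l=0}^{j-1} C_l,\ \gamma M\right\},
\]
so in particular $M_k \ge \gamma M$ and $M_k \ge \frac{1}{\alpha k}\sum_{l=0}^{k-1} C_l$ always hold.

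In the first case, suppose $M_i = \gamma M$. Since $i < k$, it follows immediately that $\frac{i}{k}M_i = \frac{i}{k}\gamma M \le \gamma M \le M_k$, giving the desired inequality.

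In the second case, suppose $M_i = \frac{1}{\alpha i}\sum_{l=0}^{i-1} C_l$. The key observation here is that $C_l \ge 0$ for every $l \ge 0$, which is immediate from its definition in \eqref{eq:C} as a maximum of two nonnegative quantities (in particular the norm ratio is nonnegative). Consequently, because $k > i$, one has $\sum_{l=0}^{k-1} C_l \ge \sum_{l=0}^{i-1} C_l$, and combining this monotonicity of partial sums with the lower bound $M_k \ge \frac{1}{\alpha k}\sum_{l=0}^{k-1} C_l$ yields
\[
M_k \ \ge\ \frac{1}{\alpha k}\sum_{l=0}^{i-1} C_l \ =\ \frac{i}{k}\cdot\frac{1}{\alpha i}\sum_{l=0}^{i-1} C_l \ =\ \frac{i}{k}M_i.
\]

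The proof is really just bookkeeping; there is no conceptual obstacle. The only details to double-check are the nonnegativity of the $C_l$'s (immediate from \eqref{eq:C}) and the indexing conventions, in particular that the hypothesis $i \ge 1$ is used to make the sum $\sum_{l=0}^{i-1} C_l$ well defined in the second case.
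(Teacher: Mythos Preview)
Your proof is correct and follows essentially the same approach as the paper's. The paper establishes $C_{k-1}^{avg} \ge \tfrac{i}{k} C_{i-1}^{avg}$ directly (using nonnegativity of the $C_l$'s) and then invokes the elementary fact that $\max\{a,c\}\ge\max\{b,d\}$ whenever $a\ge b$ and $c\ge d$; your case analysis on which term realizes the max in $M_i$ is just an equivalent unpacking of that same step.
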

\begin{proof}
	From the definition of $ C_k^{avg} $ in \eqref{eq:Cave}, for every $i=1,\ldots,k-1$, we have
	\[
	k C_{k-1}^{avg} - i C_{i-1}^{avg} = C_{i}+ \ldots+C_{k-1}
	\]
	and thus
	\[
	\frac{C_{k-1}^{avg}}{C_{i-1}^{avg}} = \frac{i}{k} + \frac{C_{i}+ \ldots+C_{k-1}}{ k C_{i-1}^{avg}}\ge \frac ik.
	\]
	The conclusion follows from the above inequality, the definition of $ M_k $ in \eqref{eq:M} and the fact that $ \max\{a,c\}\ge\max\{b,d\} $ for $ a,b,c,d\in \R $ such that $ a\ge b $ and $ c\ge d $.
\end{proof}

The following result describes bounds on $A_k$ in terms of the
first $k$ elements of the sequence $\{M_i\}$ and also in terms of $M_k$ alone.

\begin{lemma}\label{estimates}
	Consider the sequences $ \{A_k\} $ and $ \{M_i\} $ defined in \eqref{eq:aktx} and \eqref{eq:M}, respectively.
	For every $ k\ge 12 $, we have
	\begin{equation}\label{ineq:est1}
	A_k
	\le \left( \sum_{i=0}^{k-1}\frac{1}{\sqrt{M_i}}\right)^2 \le  k \sum_{i=0}^{k-1}\frac{1}{M_i} \le
	k^2\frac{\theta_k}{M_k}
	\end{equation}
	and
	\begin{equation}\label{ineq:est2}
	A_k \ge \frac{1}{4}\left( \sum_{i=0}^{k-1}\frac{1}{\sqrt{M_i}}\right) ^2 \ge \frac{k^2}{12M_k}
	\end{equation}
	where $\theta_k$ is as in \eqref{def:theta}.
\end{lemma}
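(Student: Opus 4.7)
The starting point is the identity $A_{k+1}=M_k a_k^2$ from \eqref{eq:rel} together with $a_k=A_{k+1}-A_k$ from the second identity in \eqref{eq:aktx}. Taking square roots rewrites this as
\[
\sqrt{A_{k+1}}=\sqrt{M_k}\,(A_{k+1}-A_k),
\]
so that
\[
\sqrt{A_{k+1}}-\sqrt{A_k}=\frac{A_{k+1}-A_k}{\sqrt{A_{k+1}}+\sqrt{A_k}}.
\]
From this single identity both the upper and lower telescoping bounds will follow, after which two separate estimates (Cauchy--Schwarz and Lemma~\ref{lem:avg}) will yield the remaining inequalities.

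For the upper bound \eqref{ineq:est1}, I would use $\sqrt{A_{k+1}}+\sqrt{A_k}\ge \sqrt{A_{k+1}}$ to get $\sqrt{A_{k+1}}-\sqrt{A_k}\le 1/\sqrt{M_k}$, and then telescope from $i=0$ to $k-1$ using $A_0=0$ to obtain $\sqrt{A_k}\le \sum_{i=0}^{k-1}1/\sqrt{M_i}$, which is the first inequality. The second inequality is Cauchy--Schwarz applied to the vectors $(1,\dots,1)$ and $(1/\sqrt{M_i})$. The third inequality uses the definition of $\theta_k$ in \eqref{def:theta}, which gives $1/M_i\le \theta_k/M_k$ for every $0\le i\le k-1$, so that $\sum_{i=0}^{k-1}1/M_i\le k\theta_k/M_k$.

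For the lower bound \eqref{ineq:est2}, I would use the symmetric estimate $\sqrt{A_{k+1}}+\sqrt{A_k}\le 2\sqrt{A_{k+1}}$ to deduce $\sqrt{A_{k+1}}-\sqrt{A_k}\ge 1/(2\sqrt{M_k})$, and telescope to obtain $\sqrt{A_k}\ge \tfrac12\sum_{i=0}^{k-1}1/\sqrt{M_i}$, which gives the first inequality of \eqref{ineq:est2} after squaring. For the second (sharper) inequality I would invoke Lemma~\ref{lem:avg}, which for $1\le i<k$ yields $1/\sqrt{M_i}\ge \sqrt{i/(kM_k)}$, hence
\[
\sum_{i=0}^{k-1}\frac{1}{\sqrt{M_i}}\ge \frac{1}{\sqrt{kM_k}}\sum_{i=1}^{k-1}\sqrt{i}\ge \frac{1}{\sqrt{kM_k}}\int_0^{k-1}\sqrt{x}\,dx=\frac{2(k-1)^{3/2}}{3\sqrt{kM_k}}.
\]
Squaring and dividing by $4$ gives $\tfrac14(\sum 1/\sqrt{M_i})^2\ge (k-1)^3/(9kM_k)$.

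The main obstacle is the constant-tracking in the very last step: I have to verify that $(k-1)^3/(9k)\ge k^2/12$ for $k\ge 12$, which is equivalent to $4(k-1)^3\ge 3k^3$, i.e.\ $1-1/k\ge (3/4)^{1/3}$. A direct numerical check confirms this is tight around $k\approx 11$ and holds comfortably for $k\ge 12$, which is precisely why the lemma is stated with this threshold. Everything else is a clean telescoping plus standard inequalities, so this constant-matching is really the only place where the proof is nontrivial.
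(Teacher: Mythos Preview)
Your proof is correct and follows essentially the same route as the paper: derive the telescoping bounds $1/(2\sqrt{M_i})\le \sqrt{A_{i+1}}-\sqrt{A_i}\le 1/\sqrt{M_i}$, sum using $A_0=0$, then apply Cauchy--Schwarz and the definition of $\theta_k$ for \eqref{ineq:est1}, and Lemma~\ref{lem:avg} plus the integral bound $\sum_{i=1}^{k-1}\sqrt{i}\ge \tfrac{2}{3}(k-1)^{3/2}$ for \eqref{ineq:est2}. The only difference is cosmetic: the paper obtains the telescoping step by directly manipulating the explicit formula $a_i=(1+\sqrt{1+4M_iA_i})/(2M_i)$ together with $\sqrt{b_1+b_2}\le\sqrt{b_1}+\sqrt{b_2}$, whereas you use the equivalent identity $\sqrt{A_{i+1}}=\sqrt{M_i}\,(A_{i+1}-A_i)$ from \eqref{eq:rel} combined with difference of squares, which is arguably cleaner. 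Your final constant check $4(k-1)^3\ge 3k^3$ for $k\ge 12$ matches the paper's numerical verification (they phrase it as $\tfrac{2}{3}(11/12)^{3/2}\ge 0.58$ and $0.58^2/4\ge 1/12$).
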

\begin{proof}
	We first establish the inequalities in \eqref{ineq:est1}.
	Using the first two identities in \eqref{eq:aktx} and the fact
	$ \sqrt{b_1 + b_2}\le\sqrt{b_1}+\sqrt{b_2} $ for any $ b_1,b_2 \in \R_+ $, we conclude that for any
	$ i \ge 0$,
	\begin{align*}
	\sqrt{ A_{i+1} }
	= \left( A_i+\frac{ 1 + \sqrt{1+4M_iA_i} }{2M_i} \right)^{1/2}
	\le \left( A_i+\frac{ 1 + \sqrt{M_iA_i} }{M_i} \right)^{1/2} \le \sqrt{A_i}+\frac{1}{\sqrt{M_i}}.
	\end{align*}
	Now, the first inequality in \eqref{ineq:est1} follows by
	summing the above inequality from $i=0$ to $k-1$ and using the assumption that $A_0=0$.
	Moreover, the second and third inequalities in \eqref{ineq:est1} follow straightforwardly from
	the Cauchy-Schwarz inequality and the definition of $\theta_k$ in \eqref{def:theta}, respectively.
	
	We now establish the inequalities in \eqref{ineq:est2}.
	Using the first two identities in \eqref{eq:aktx}, we have
	\begin{align*}
	\sqrt{ A_{i+1} }
	= \left( A_i+\frac{ 1 + \sqrt{1+4M_iA_i} }{2M_i} \right)^{1/2}
	\ge \left( A_i+\frac{ 1 + 2\sqrt{M_iA_i} }{2M_i} \right)^{1/2} \ge \sqrt{A_i}+\frac{1}{2\sqrt{M_i}}.
	\end{align*}
	The first inequality in \eqref{ineq:est2} now follows by
	summing the above inequality from $i=0$ to $k-1$ and using the assumption that $A_0=0$.
	For every $ k\ge 12 $, we have
	\[
	\sum_{i=1}^{k-1}\sqrt{i}\ge \int_{0}^{k-1}\sqrt{x}dx=\frac23(k-1)^{3/2} \ge\frac23\left( \frac {11}{12}k\right) ^{3/2}\ge0.58k^{3/2},
	\]
	which, together with Lemma \ref{lem:avg}, then implies that
	\[
	\sum_{i=1}^{k-1}\frac{1}{\sqrt{M_i}} \ge  \frac{1}{\sqrt{k M_k }} \sum_{i=1}^{k-1}\sqrt{i}
	\ge  \frac{0.58k}{\sqrt{M_k}}.
	\]
	The second inequality in \eqref{ineq:est2} now follows immediately from the one above.
\end{proof}

The following result provides a lower bound on
the term inside the parentheses of the left hand side of \eqref{ineq:sum}.

\begin{lemma}\label{lem:est3}
	For every $ k\ge 12 $, we have
	\[
	\sum_{i\in {\cal G}_k}\frac{A_{i+1}}{M_i} \ge  \frac{k^3}{3402 M_k^2}.
	\]
\end{lemma}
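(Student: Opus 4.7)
The plan is to combine the three preceding lemmas in a direct way. Lemma~\ref{lem:card} guarantees $|{\cal G}_k| \ge 2k/3$, Lemma~\ref{lem:avg} implies that $M_j \le (k/j) M_k$ for $1 \le j < k$ (so $M_j$ cannot be much larger than $M_k$ once $j$ is comparable to $k$), and the second inequality in \eqref{ineq:est2} from Lemma~\ref{estimates} gives $A_{i+1} \ge (i+1)^2/(12 M_{i+1})$ whenever $i+1 \ge 12$. The strategy is to restrict the sum to the large-index portion of ${\cal G}_k$, where each surviving term contributes an amount of order $k^2/M_k^2$; with $\Theta(k)$ such terms, the claimed bound of order $k^3/M_k^2$ will follow.

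Concretely, I would first invoke a pigeonhole count: since $|{\cal G}_k| \ge 2k/3$ and the range $[0, \lceil k/3 \rceil - 1]$ has at most $\lceil k/3 \rceil$ elements,
\[
|{\cal G}_k \cap [\lceil k/3 \rceil,\, k-2]| \;\ge\; \frac{2k}{3} - \lceil k/3 \rceil - 1 \;\ge\; \frac{k}{3} - 2.
\]
For each $i$ in this restricted set (with $i \ge 11$ so Lemma~\ref{estimates} applies to $A_{i+1}$), I would apply Lemma~\ref{lem:avg} separately to $M_i$ and to $M_{i+1}$ and combine with the $A_{i+1}$ bound to obtain
\[
\frac{A_{i+1}}{M_i} \;\ge\; \frac{(i+1)^2}{12\, M_{i+1}\, M_i} \;\ge\; \frac{i(i+1)^3}{12\, k^2\, M_k^2} \;\ge\; \frac{(k/3)^4}{12\, k^2\, M_k^2} \;=\; \frac{k^2}{972\, M_k^2},
\]
where the penultimate inequality uses $i \ge k/3$.

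Summing this uniform lower bound over the $\ge k/3 - 2$ qualifying indices yields a total of the form $(k/3 - 2)\, k^2/(972\, M_k^2)$, which, by an elementary algebraic check, is at least $k^3/(3402\, M_k^2)$ once $k$ is above a modest threshold; the specific constant $3402$ is simply what emerges from balancing the $-2$ correction against the leading $k/3$ term. I expect the main obstacle to lie in the moderate-$k$ regime $12 \le k < 33$, in which the requirement $i \ge 11$ from Lemma~\ref{estimates} is not automatically implied by $i \ge \lceil k/3 \rceil$ and hence trims the pigeonhole count too aggressively. These finitely many cases can be dispatched either by a direct verification or by a slightly sharper argument that also includes the endpoint $i = k-1$ through the elementary bound $M_{k-1} \le (k/(k-1)) M_k$, which is not covered by Lemma~\ref{lem:avg} as stated but is a trivial variant of it.
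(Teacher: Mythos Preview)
Your overall strategy matches the paper's: restrict to indices $i \gtrsim k/3$, use Lemma~\ref{lem:card} to count them, Lemma~\ref{lem:avg} to control the $M_i$'s, and Lemma~\ref{estimates} to bound the $A_{i+1}$'s. The one technical difference---and the source of your small-$k$ trouble---is how \eqref{ineq:est2} is invoked. You apply it to each $A_{i+1}$ individually, which forces $i+1 \ge 12$ for every surviving index; the paper instead uses the monotonicity of $\{A_j\}$ to write $A_{i+1} \ge A_{\lfloor k/3\rfloor+1}\ge A_{\lceil k/3\rceil}$ for all $i$ in the restricted set and applies \eqref{ineq:est2} just once at $\lceil k/3\rceil$, then converts $M_{\lceil k/3\rceil}$ to $M_k$ via $M_{\lceil k/3\rceil}\le 3M_k$ from Lemma~\ref{lem:avg}. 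That device sidesteps the per-index threshold entirely and delivers the constant $3402$ directly for all $k\ge 12$.

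Your deferral of the moderate-$k$ range to ``direct verification'' is not viable as written: the quantities $M_i$, $A_i$, and the membership of ${\cal G}_k$ all depend on the specific run of the algorithm, so there is no finite list of numerical cases to check---an actual argument is required, and the paper's monotonicity trick is exactly that. Note also that even once the threshold issue disappears (i.e., once $\lceil k/3\rceil\ge 11$), your count $k/3-2$ together with the per-term bound $k^2/(972M_k^2)$ yields the constant $3402$ only for $k\ge 42$, so the gap is wider than the range $12\le k<33$ you identify. Two minor points: Lemma~\ref{lem:avg} is stated for $1\le i<k$ and therefore already covers $i=k-1$, so excising that endpoint is unnecessary; and the paper obtains the cleaner count $|\tilde{\cal G}_k|\ge k/3$ by taking the cutoff at $\lfloor k/3\rfloor$ and writing $|\tilde{\cal G}_k|=k-\lfloor k/3\rfloor-|\tilde{\cal B}_k|\ge k-\lfloor k/3\rfloor-|{\cal B}_k|\ge k/3$.
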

\begin{proof}
	Let $ k\ge12 $ be given and define
	\begin{equation}\label{def:tGk}
	\tilde {\cal G}_k:=\{i\in{\cal G}_k :  i\ge \lfloor k/3 \rfloor \}, \quad \tilde {\cal B}_k:=\{i\in{\cal B}_k: i\ge \lfloor k/3 \rfloor \}.
	\end{equation}
	Using Lemma \ref{lem:avg}, the facts that $\tilde {\cal G}_k\subset {\cal G}_k$,
	$ \{A_k\} $ is strictly increasing,
	and $i/k \ge 2/7 $ for any $ i\in\tilde {\cal G}_k $ and $ k\ge12 $, and inequality \eqref{ineq:est2},
	we conclude that
	\begin{align}
	\sum_{i\in {\cal G}_k}\frac{A_{i+1}}{M_i} 
	&\ge  \sum_{i\in {\cal G}_k} \frac{iA_{i+1}}{kM_k}
	\ge  \sum_{i\in \tilde {\cal G}_k} \frac{iA_{i+1}}{kM_k}
	\ge \frac{2|\tilde {\cal G}_k|}{7 M_k} A_{\lfloor k/3\rfloor+1} \nn \\
	& \ge
	\frac{2|\tilde {\cal G}_k|}{7 M_k} A_{\lceil k/3\rceil} \ge
	\frac{|\tilde {\cal G}_k| ( \lceil k/3\rceil)^2}{42 M_k M_{\lceil k/3\rceil}} \ge
	\frac{|\tilde {\cal G}_k| k^2}{378 M_k M_{\lceil k/3\rceil}} \label{ineq:key}.
	\end{align}
	On the other hand,  Lemma \ref{lem:avg} with $i = \lceil k/3 \rceil$ implies that
	\[
	M_k \ge \frac{\lceil k/3\rceil}{k} M_{\lceil k/3\rceil} \ge \frac13 M_{\lceil k/3\rceil}.
	\]
	Moreover, the definition of $\tilde {\cal G}_k$ in \eqref{def:tGk}, the fact that $\tilde {\cal B}_k \subset {\cal B}_k$ and Lemma \ref{lem:card} imply that 
	\[
	|\tilde {\cal G}_k|=k-\lfloor k/3\rfloor-|\tilde {\cal B}_k| \ge k-\lfloor k/3\rfloor-| {\cal B}_k|
	\ge k/3.
	\]
	The conclusion of the lemma now follows by combining \eqref{ineq:key} with the last two
	observations.
\end{proof}

We are now ready to prove the main result  of our paper.

\vgap

\noindent
{\bf Proof of Theorem \ref{thm:main}:}
	(a) The conclusion immediately follows from Lemma \ref{lem:tech1}(d).

	(b) Letting $x_*\in X_*$ be given and noting that $\eta_k(x_*) \ge 0$ in view of
	the definition of $\eta_k$ in \eqref{eq:eta} and using the above inequality,
	Lemma \ref{lem:sum} with $ u=x_* $, Lemma \ref{lem:card} and relation \eqref{ineq:est1},
	we conclude that
	\begin{align*}
	\left( \frac{1}{36.1}\sum_{i\in {\cal G}_k}\frac{A_{i+1}}{M_i} \right) \min_{1\le i\le k}\|v_i\|^2
	&\le \|x_0-x_*\|^2+ \bar 
	mD^2A_k+\frac{1-\gamma}{\gamma}D^2|{\cal B}_k|\\
	&\le D^2 + \bar mD^2 A_k+\frac{ (1-\gamma)D^2k}{3\gamma} \\
	&\le D^2+ \frac{\bar mD^2k^2\theta_k}{M_k}+\frac{ (1-\gamma)D^2k}{3\gamma}.
	\end{align*}
Statement b) of the theorem now follows by combining the above inequality and Lemma \ref{lem:est3}.
\QEDA

\section{Numerical results}\label{sec:numerics}
This section presents computational results to illustrate the performance of two variants
of the AC-ACG method against
five other state-of-the-art algorithms on a collection of nonconvex optimization problems that are either in the form of or can be easily reformulated
into \eqref{eq:PenProb2Intro}.
It contains five subsections, with each one
reporting computational results on one of
following classes of nonconvex optimization problems:
(a) quadratic programming (Subsection \ref{subsec:QP}); (b) support vector machine (SVM, Subsection \ref{subsec:SVM}); (c) sparse PCA (Subsection \ref{subsec:SPCA}); (d) matrix completion (Subsection \ref{subsec:MC}); and (e)
nonnegative matrix factorization (NMF, Subsection \ref{subsec:NMF}). 
Note that sparse PCA and NMF are problems
for which $\dom h$ is unbounded.

We start by describing the two AC-ACG variants considered in our computational experiments, both of which do not impose the restrictive condition \eqref{eq:alpha} on the choice
of $\alpha$ and $\gamma$.
The first variant, which we refer to as ACT throughout this section, preserves all steps in the AC-ACG method except that
$\gamma$ and $\alpha$ are provided as input by the user
without necessarily satisfying \eqref{eq:alpha}.
In our implementation, we set $\gamma=0.01$ for every
problem class listed above but the one in (b)
for which $\gamma$ is set to $0.002$. The latter choice
of $\gamma$ prevents the percentage of good iterations from being 100\% all the time and instead keeps it
within a range of about 65\% to 75\% (see Subsection
\ref{subsec:SVM}). The choice of the scalar $\alpha$
varies per problem class and is described in each one of
the subsections below.
The second variant, referred to as AC throughout this section,
sets $M_0=0.01M$, and computes
$M_{k+1}$ as in \eqref{eq:M} with $\gamma=10^{-6}$ and $C_k$ as
\begin{equation}\label{eq:AC}
C_k= \max\{ {\cal C}(y_{k+1}^g;\tx_k), 0\} 
\end{equation}
where $ {\cal C}(\cdot;\cdot) $
is defined in \eqref{eq:cal C}.
Our implementation of AC sets $ \alpha $ to values that
depend on the problem class under consideration and are specified in the subsections below.
Clearly, among the two variants described above,
ACT is the closest to AC-ACG.

We compare the two variants of AC-ACG with five other methods, namely: 
(i) the AG method proposed in \cite{nonconv_lan16};
(ii) the NC-FISTA of \cite{liang2019fista};
(iii) the ADAP-NC-FISTA also described in \cite{liang2019fista};
(iv) the NM-APG method proposed in \cite{Li_Lin2015};
and (v) the UPFAG method in \cite{LanUniformly}.
We remark that methods (i)-(iii) are the three pure ACG variants that have been outlined in Subsection \ref{subsec:pure} and methods (iv) and (v) are two
among the three hybrid ACG variants that have been discussed in Subsection \ref{subsec:hybrid}.
For the sake of simplicity, we use the
abbreviations  NM, UP, NC and AD to refer to
the NM-APG, UPFAG, NC-FISTA and ADAP-NC-FISTA methods, respectively, both in the discussions and tables below.



This paragraph provides details about
the three pure ACG variants used in our benchmark.
AG was implemented by the authors based on its description provided in Algorithm 1 of \cite{nonconv_lan16}
where the sequences
$ \{\alpha_k \} $, $ \{\beta_k \} $ and $ \{\lam_k \} $ were chosen as
$ \alpha_k = 2/(k+1) $, $ \beta_k = 0.99/M $ and $ \lam_k = k\beta_k/2 $, respectively,
and the Lipschitz constant $ M $ was computed as described in each of the five subsections below.
We note that the choice $\beta_k=0.99/M$ used in our implementation differs from
the one suggested in \cite{nonconv_lan16}, namely, $\beta_k=0.5/M$ (see (2.27) of \cite{nonconv_lan16}),
and consistently improves the practical performance of AG.
The NC and AD variants were also implemented by
the authors based on their descriptions in \cite{liang2019fista}. The triple $ (M, m, A_0) $ needed as input by NC was set to $ (M/0.99, m, 5000) $ where $m \ge \bar m$ (see the first inequality in \eqref{ineq:curv}).
The triple $ (M_0, m_0, \theta) $ needed as input by AD was 
set to $ (1,1000,1.25) $ in Subsections \ref{subsec:QP}, \ref{subsec:SVM} and \ref{subsec:NMF}, and $ (1,1,1.25) $ in Subsections \ref{subsec:SPCA} and \ref{subsec:MC}.

This paragraph provides implementation details for the two hybrid ACG variants used in our benchmark.
The NM method was implemented by the authors based on its description provided in
Algorithm 2 of \cite{Li_Lin2015}
which does not use line searches to compute
$M_k^a$ and $M_k^{na}$.
More specifically,
the quadruple $ (\alpha_x,\alpha_y,\eta,\delta) $ needed
as input by Algorithm 2 of \cite{Li_Lin2015}
was set to $ (0.99/M,0.99/M,0.8,1) $.
The code for UP was made available by the authors of \cite{LanUniformly} where
UP is described (see Algorithm 1 of \cite{LanUniformly}). In particular, we have used their choice of parameters
but have modified the code slightly to accommodate for the termination criterion \eqref{incl:v} used in our benchmark.
More specifically, the parameters $ (\hat \lam_0, \hat \beta_0, \gamma_1,\gamma_2,\gamma_3,\delta,\sigma)$  needed as input by UP
were set to
$ (1/M,1/M,1,1,1,10^{-3},10^{-10}) $.
Recall that UP computes the good upper curvatures
$M_k^a$ and $M_k^{na}$ by line searches (see Subsection \ref{subsec:hybrid}).
Our implementation of UP initiates
these scalars in both line searches by
using a Barzilai-Borwein type strategy
(see equation (2.12) in \cite{LanUniformly}).



All seven methods terminate with a pair $(z,v)$ satisfying
\[
v\in \nabla f(z)+\partial h(z), \qquad  \frac{\|v\|}{\|\nabla f(z_{0})\|+1}\leq \hat \rho,
\]
where $ \hat \rho=5\times 10^{-4} $ in the matrix completion problem and $ \hat \rho=10^{-7} $ in all the other problems.
All the computational results were obtained using
MATLAB R2017b on a MacBook Pro with a quad-core Intel Core i7 processor and 16 GB of memory.

\subsection{Quadratic programming} \label{subsec:QP}


This subsection discusses the performance of the AC-ACG method for solving a class of quadratic programming problems.

More specifically, it considers the problem
\begin{equation}\label{testQPprobMat}
\min\left\{ f(Z):=-\frac{\alpha_1}{2}\|D\mathcal{B}(Z)\|^{2}+\frac{\alpha_2}{2}\|\mathcal{A}(Z)-b\|^{2}:Z\in P_{n}\right\}
\end{equation}
where $(\alpha_1,\alpha_2)\in \R^2_{++}$, $b\in \R^{ l}$ is a vector with entries sampled
from the uniform distribution ${\cal U}[0,1]$, 
$D\in \R^{n\times n}$ is a diagonal matrix whose diagonal entries
are sampled from the discrete uniform distribution ${\cal U}\{1,1000\}$,
$ P_n:=\{Z\in {\cal S}_+^n:\text{tr}(Z)=1\} $ denotes the spectraplex, and 
$ \mathcal{A}:{\cal S}_+^n\rightarrow \R^l $ and $ \mathcal{B}:{\cal S}_+^n\rightarrow \R^n $ are linear operators given by
\begin{align*}
\left[ \mathcal{A}(Z) \right]_i&=
\inner{A_i}{Z}_F \quad \forall \, 1\le i\le l, \\
\left[ \mathcal{B}(Z) \right]_j&=\inner{B_j}{Z}_F \quad \forall \, 1\le j\le n, 
\end{align*}
with $ A_i \in {\cal S}^n_+$ and
$ B_j \in {\cal S}^n_+$ all being sparse matrices having the same density (i.e., percentage of
nonzeros) $ d $ and nonzero entries
uniformly sampled from $[0,1]$.

The quadratic programming problem \eqref{testQPprobMat} is an instance of \eqref{eq:PenProb2Intro} where $ h$ is the indicator function of the spectraplex $ P_n $.
For chosen curvature pairs $(M,m)\in\R^2_{++}$, the scalars $\alpha_1$ and $\alpha_2$ are chosen so that
$\lambda_{\max}(\nabla^{2}f)=M$ and $\lambda_{\min}(\nabla^{2}f)=-m$ where $\lambda_{\max}(\cdot)$ and $\lambda_{\min}(\cdot)$
denote the largest and smallest eigenvalue functions, respectively.

We start all seven methods from the same initial point $ Z_0 = I_n/n $ where $ I_n $ is an $ n\times n $ identity matrix, namely $ Z_0 $ is the centroid of $ P_n $.
The parameter $ \alpha$ is set to $1 $ in AC and
0.5 in ACT.

Numerical results for the seven methods are given
in Tables \ref{tab:t1}, \ref{tab:t2} and \ref{tab:t3},
with each table addressing a collection of
instances with the same dimension pair $(l,n)$ and
density $ d $.
Specifically, each row of Tables \ref{tab:t1}, \ref{tab:t2} and \ref{tab:t3} corresponds to an
instance of \eqref{testQPprobMat}, their first column
specifies the pair $(M,m)$ for
the corresponding instance,
their second to sixth
(resp., seventh to eleventh) columns
provide numbers of iterations (resp., running times) for the seven methods.
The best objective function values obtained
by all seven methods are not reported
since they are essentially the same
on all instances.
The number of resolvent evaluations is 1 in NC, 2 in AG, AC and ACT, 1 or 2 in NM, 1 on average in AD, and 3 on average in UP.
The bold numbers highlight the method that has the best performance in an instance of the problem.

Some statistic measures for AC and ACT to solve
the instances in Tables \ref{tab:t1}, \ref{tab:t2} and \ref{tab:t3} are given in Tables \ref{tab:t1-1}, \ref{tab:t2-1} and \ref{tab:t3-1}, respectively.
The first column in these tables
is the same as that of Tables \ref{tab:t1}, \ref{tab:t2} and \ref{tab:t3}, 
the second (resp., fifth) column provides the maximum of all observed curvatures $  C_k $ in AC (resp. ACT),
the third (resp., sixth) column provides the average of all observed curvatures $ C_k $ in AC (resp. ACT),
and the fourth (resp. seventh) column gives the percentage of good iterations (see \eqref{def:G}) in AC (resp. ACT).

%

%
%

\vspace{2mm}
In Tables \ref{tab:t1}-\ref{tab:t1-1}, the density $ d=2.5\% $ and the dimension pair $ (l, n) = (50, 200)$.
\vspace{-2mm}
\begin{table}[H]
	\begin{centering}
		\begin{tabular}{|>{\centering}p{1.5cm}|>{\centering}p{.6cm}>{\centering}p{.8cm}>{\centering}p{.8cm}>{\centering}p{1.8cm}>{\centering}p{1.8cm}|>{\centering}p{.6cm}>{\centering}p{.6cm}>{\centering}p{.6cm}>{\centering}p{1.5cm}>{\centering}p{1.5cm}|}
			\hline 
			{$(M,m)$}
			& \multicolumn{5}{c|}{Iteration Count} & \multicolumn{5}{c|}{Running Time (s)}
			\tabularnewline  
			\cline{2-11}
			&  {\small{}AG} & {\small{}NM} & {\small{}UP}& {\small{}NC/AD} & {\small{}ACT/AC} & {\small{}AG} & {\small{}NM} & {\small{}UP}& {\small{}NC/AD} & {\small{}ACT/AC}
			\tabularnewline
			\hline 
			{\small{}$(10^6,10^6)$}  & {\small{}46} & {\small{}80} & {\small{}9} & {\small{}33/12} & {\small{}23/8} & {\small{}1.6} & {\small{}2.1} & {\small{}0.7} &{\small{}}0.8/0.7& {\small{}1.4/\textbf{0.6}}  \tabularnewline
			\hline
			{\small{}$(10^6,10^5)$} & {\small{}3089} & {\small{}6242} & {\small{}2633} & {\small{}3384/2206} & {\small{}1009/883} & {\small{}130} & {\small{}191} &{\small{}261} &{\small{}94/89} &{\small{}57/\textbf{39}}  \tabularnewline
			\hline
			{\small{}$(10^6,10^4)$}  & {\small{}5400} & {\small{}10404} & {\small{}7203} & {\small{}1236/2591} & {\small{}1820/1760} & {\small{}188} & {\small{}328} &{\small{}705} &{\small{}\textbf{30}/104} &{\small{}109/{73}}  \tabularnewline
			\hline
			{\small{}$(10^6,10^3)$}  & {\small{}4621} & {\small{}11053} & {\small{}5429} & {\small{}5139/2637} & {\small{}1712/1508} & {\small{}176} & {\small{}360} &{\small{}540} &{\small{}122/109} & {\small{}118/\textbf{68}} \tabularnewline
			\hline
			{\small{}$(10^6,10^2)$} & {\small{}4476} & {\small{}11271} & {\small{}6891} & {\small{}11838/2639} & {\small{}1610/1472} & {\small{}176} & {\small{}312} &{\small{}653} &{\small{}283/116} & {\small{}103/\textbf{65}} \tabularnewline
			\hline
			{\small{}$(10^6,10)$} & {\small{}4461} & {\small{}11253} & {\small{}6479} & {\small{}14851/2640} & {\small{}1599/1485} & {\small{}171} & {\small{}311} &{\small{}613} &{\small{}362/116} & {\small{}155/\textbf{66}} \tabularnewline
			\hline
		\end{tabular}
		\par\end{centering}
	\caption{Numerical results for AG, NM, UP, NC, AD, ACT and AC}\label{tab:t1}
\end{table}
\vspace{-5mm}

\begin{table}[H]
	\begin{centering}
		\begin{tabular}{|>{\centering}p{1.5cm}|>{\centering}p{1cm}>{\centering}p{1cm}>{\centering}p{1cm}|>{\centering}p{1cm}>{\centering}p{1cm}>{\centering}p{1cm}|}
			\hline 
			{$(M,m)$}
			& \multicolumn{3}{c|}{AC} & \multicolumn{3}{c|}{ACT}
			\tabularnewline  
			\cline{2-7}
			& {\small{}Max} & {\small{}Avg} & {\small{}Good} & {\small{}Max} & {\small{}Avg} & {\small{}Good}
			\tabularnewline
			\hline 
			{\small{}$(10^6,10^6)$} & {\small{}1.88E5} & {\small{}3.04E4} & {\small{}88\%} & {\small{}8.38E5} & {\small{}1.53E5} & {\small{}95\%}
			\tabularnewline \hline
			{\small{}$(10^6,10^5)$} & {\small{}4.85E5} & {\small{}8.84E4} & {\small{}64\%} & {\small{}7.00E5} & {\small{}9.25E4} & {\small{}98\%}
			\tabularnewline
			\hline
			{\small{}$(10^6,10^4)$} & {\small{}5.42E5} & {\small{}1.24E5} & {\small{}65\%} & {\small{}7.24E5} & {\small{}1.04E5} & {\small{}99\%}
			\tabularnewline \hline
			{\small{}$(10^6,10^3)$} & {\small{}5.48E5} & {\small{}1.20E5} & {\small{}69\%} & {\small{}7.27E5} & {\small{}1.16E5} & {\small{}97\%}
			\tabularnewline
			\hline
			{\small{}$(10^6,10^2)$} & {\small{}5.49E5} & {\small{}1.20E5} & {\small{}68\%} & {\small{}7.27E5} & {\small{}1.10E5} & {\small{}99\%}
			\tabularnewline
			\hline
			{\small{}$(10^6,10)$} & {\small{}5.49E5} & {\small{}1.18E5} & {\small{}70\%} & {\small{}7.27E5} & {\small{}1.09E5} & {\small{}99\%}
			\tabularnewline
			\hline
		\end{tabular}
		\par\end{centering}
	\caption{AC and ACT statistics}\label{tab:t1-1}
\end{table}

In Tables \ref{tab:t2}-\ref{tab:t2-1}, the density $ d=0.5\% $ and the dimension pair $ (l, n) = (50, 400)$.
\vspace{-2mm}
\begin{table}[H]
	\begin{centering}
		\begin{tabular}{|>{\centering}p{1.5cm}|>{\centering}p{.6cm}>{\centering}p{.8cm}>{\centering}p{.8cm}>{\centering}p{1.8cm}>{\centering}p{1.8cm}|>{\centering}p{.6cm}>{\centering}p{.6cm}>{\centering}p{.6cm}>{\centering}p{1.5cm}>{\centering}p{1.5cm}|}
			\hline 
			{$(M,m)$}
			& \multicolumn{5}{c|}{Iteration Count} & \multicolumn{5}{c|}{Running Time (s)}
			\tabularnewline  
			\cline{2-11}
			&  {\small{}AG} & {\small{}NM} & {\small{}UP}& {\small{}NC/AD} & {\small{}ACT/AC} & {\small{}AG} & {\small{}NM} & {\small{}UP}& {\small{}NC/AD} & {\small{}ACT/AC}
			\tabularnewline
			\hline 
			{\small{}$(10^6,10^6)$}  & {\small{}44} & {\small{}75} & {\small{}10} & {\small{}33/12} & {\small{}17/8} & {\small{}4.4} & {\small{}5.1} & {\small{}1.9} &{\small{}}2.1/1.8& {\small{}2.6/\textbf{1.0}}  \tabularnewline
			\hline
			{\small{}$(10^6,10^5)$} & {\small{}1411} & {\small{}3151} & {\small{}56} & {\small{}610/530} & {\small{}403/131} & {\small{}134} & {\small{}224} & \textbf{\small{}13} &{\small{}39/56} &{\small{}58/\small{}16}  \tabularnewline
			\hline
			{\small{}$(10^6,10^4)$}  & {\small{}1963} & {\small{}5071} & {\small{}105} & {\small{}1212/868} & {\small{}599/237} & {\small{}195} & {\small{}373} & \textbf{\small{}26} &{\small{}76/93} &{\small{}88/\small{}28}  \tabularnewline
			\hline
			{\small{}$(10^6,10^3)$}  & {\small{}1935} & {\small{}5172} & {\small{}115} & {\small{}4415/900} & {\small{}564/245} & {\small{}193} & {\small{}382} & \textbf{\small{}29} &{\small{}277/103} & {\small{}95/30} \tabularnewline
			\hline
			{\small{}$(10^6,10^2)$} & {\small{}1934} & {\small{}5045} & {\small{}119} & {\small{}7325/904} & {\small{}559/242} & {\small{}190} & {\small{}367} & {\small{}32} &{\small{}465/103} & {\small{}91/\textbf{29}} \tabularnewline
			\hline
			{\small{}$(10^6,10)$} & {\small{}1934} & {\small{}5056} & {\small{}113} & {\small{}7527/904} & {\small{}561/246} & {\small{}194} & {\small{}373} & {\small{}31} &{\small{}477/104} & {\small{}92/\textbf{29}} \tabularnewline
			\hline
		\end{tabular}
		\par\end{centering}
	\caption{Numerical results for AG, NM, UP, NC, AD, ACT and AC}\label{tab:t2}
\end{table}
\vspace{-5mm}

\begin{table}[H]
	\begin{centering}
		\begin{tabular}{|>{\centering}p{1.5cm}|>{\centering}p{1cm}>{\centering}p{1cm}>{\centering}p{1cm}|>{\centering}p{1cm}>{\centering}p{1cm}>{\centering}p{1cm}|}
			\hline 
			{$(M,m)$}
			& \multicolumn{3}{c|}{AC} & \multicolumn{3}{c|}{ACT}
			\tabularnewline  
			\cline{2-7}
			& {\small{}Max} & {\small{}Avg} & {\small{}Good} & {\small{}Max} & {\small{}Avg} & {\small{}Good}
			\tabularnewline
			\hline 
			{\small{}$(10^6,10^6)$} & {\small{}2.40E5} & {\small{}3.22E4} & {\small{}88\%} & {\small{}6.32E5} & {\small{}1.67E5} & {\small{}93\%}
			\tabularnewline \hline
			{\small{}$(10^6,10^5)$} & {\small{}1.53E5} & {\small{}1.98E4} & {\small{}76\%} & {\small{}4.05E5} & {\small{}5.92E4} & {\small{}99\%}
			\tabularnewline
			\hline
			{\small{}$(10^6,10^4)$} & {\small{}2.03E5} & {\small{}2.50E4} & {\small{}72\%} & {\small{}4.16E5} & {\small{}6.66E4} & {\small{}98\%}
			\tabularnewline \hline
			{\small{}$(10^6,10^3)$} & {\small{}2.07E5} & {\small{}2.55E4} & {\small{}69\%} & {\small{}4.17E5} & {\small{}6.58E4} & {\small{}98\%}
			\tabularnewline
			\hline
			{\small{}$(10^6,10^2)$} & {\small{}2.08E5} & {\small{}2.55E4} & {\small{}71\%} & {\small{}4.17E5} & {\small{}6.54E4} & {\small{}98\%}
			\tabularnewline
			\hline
			{\small{}$(10^6,10)$} & {\small{}2.08E5} & {\small{}2.51E4} & {\small{}68\%} & {\small{}4.17E5} & {\small{}6.56E4} & {\small{}98\%}
			\tabularnewline
			\hline
		\end{tabular}
		\par\end{centering}
	\caption{AC and ACT statistics}\label{tab:t2-1}
\end{table}

In Tables \ref{tab:t3}-\ref{tab:t3-1}, the density $ d=0.1\% $ and the dimension pair $ (l, n) = (50, 800)$.
\vspace{-2mm}
\begin{table}[H]
	\begin{centering}
		\begin{tabular}{|>{\centering}p{1.5cm}|>{\centering}p{.6cm}>{\centering}p{.8cm}>{\centering}p{.8cm}>{\centering}p{1.8cm}>{\centering}p{1.8cm}|>{\centering}p{.6cm}>{\centering}p{.6cm}>{\centering}p{.6cm}>{\centering}p{1.5cm}>{\centering}p{1.5cm}|}
			\hline 
			{$(M,m)$}
			& \multicolumn{5}{c|}{Iteration Count} & \multicolumn{5}{c|}{Running Time (s)}
			\tabularnewline  
			\cline{2-11}
			&  {\small{}AG} & {\small{}NM} & {\small{}UP}& {\small{}NC/AD} & {\small{}ACT/AC} & {\small{}AG} & {\small{}NM} & {\small{}UP}& {\small{}NC/AD} & {\small{}ACT/AC}
			\tabularnewline
			\hline 
			{\small{}$(10^6,10^6)$}  & {\small{}69} & {\small{}117} & {\small{}13} & {\small{}38/11} & {\small{}18/8} & {\small{22}} & {\small{}26} & {\small{}8} &{\small{}8/7}& {\small{}13/\textbf{4}}  \tabularnewline
			\hline
			{\small{}$(10^6,10^5)$} & {\small{}277} & {\small{}502} & {\small{}9} & {\small{}176/24} & {\small{}31/7} & {\small{119}} & {\small{}118} & {\small{}6} &{\small{}36/10} &{\small{}20/\textbf{3}}  \tabularnewline
			\hline
			{\small{}$(10^6,10^4)$}  & {\small{}491} & {\small{}1030} & {\small{}13} & {\small{}786/60} & {\small{}65/11} & {\small{173}} & {\small{}246} & {\small{}9} &{\small{}163/24} &{\small{}39/\textbf{5}}  \tabularnewline
			\hline
			{\small{}$(10^6,10^3)$}  & {\small{}531} & {\small{}1144} & {\small{}13} & {\small{}1519/70} & {\small{}67/12} & {\small{169}} & {\small{}259} & {\small{}9} &{\small{}313/28} & {\small{}41/\textbf{7}} \tabularnewline
			\hline
			{\small{}$(10^6,10^2)$} & {\small{}535} & {\small{}1156} & {\small{}13} & {\small{}1698/71} & {\small{}67/12} & {\small{172}} & {\small{}260} & {\small{}9} &{\small{}351/28} & {\small{}43/\textbf{6}} \tabularnewline
			\hline
			{\small{}$(10^6,10)$} & {\small{}536} & {\small{}1157} & {\small{}13} & {\small{}1703/71} & {\small{}67/12} & {\small{172}} & {\small{}266} & {\small{}8} &{\small{}352/28} & {\small{}44/\textbf{5}} \tabularnewline
			\hline
		\end{tabular}
		\par\end{centering}
	\caption{Numerical results for AG, NM, UP, NC, AD, ACT and AC}\label{tab:t3}
\end{table}
\vspace{-5mm}

\begin{table}[H]
	\begin{centering}
		\begin{tabular}{|>{\centering}p{1.5cm}|>{\centering}p{1cm}>{\centering}p{1cm}>{\centering}p{1cm}|>{\centering}p{1cm}>{\centering}p{1cm}>{\centering}p{1cm}|}
			\hline 
			{$(M,m)$}
			& \multicolumn{3}{c|}{AC} & \multicolumn{3}{c|}{ACT}
			\tabularnewline  
			\cline{2-7}
			& {\small{}Max} & {\small{}Avg} & {\small{}Good} & {\small{}Max} & {\small{}Avg} & {\small{}Good}
			\tabularnewline
			\hline 
			{\small{}$(10^6,10^6)$} & {\small{}1.28E5} & {\small{}1.70E4} & {\small{}88\%} & {\small{}3.65E5} & {\small{}5.37E4} & {\small{}94\%}
			\tabularnewline \hline
			{\small{}$(10^6,10^5)$} & {\small{}1.80E4} & {\small{}2.84E3} & {\small{}86\%} & {\small{}1.78E5} & {\small{}2.64E4} & {\small{}96\%}
			\tabularnewline
			\hline
			{\small{}$(10^6,10^4)$} & {\small{}3.26E4} & {\small{}3.89E3} & {\small{}91\%} & {\small{}1.78E5} & {\small{}2.99E4} & {\small{}98\%}
			\tabularnewline \hline
			{\small{}$(10^6,10^3)$} & {\small{}3.41E4} & {\small{}3.73E3} & {\small{}92\%} & {\small{}1.78E5} & {\small{}2.62E4} & {\small{}98\%}
			\tabularnewline
			\hline
			{\small{}$(10^6,10^2)$} & {\small{}3.42E4} & {\small{}3.75E3} & {\small{}92\%} & {\small{}1.78E5} & {\small{}2.58E4} & {\small{}98\%}
			\tabularnewline
			\hline
			{\small{}$(10^6,10)$} & {\small{}3.43E4} & {\small{}3.75E3} & {\small{}92\%} & {\small{}1.78E5} & {\small{}2.57E4} & {\small{}98\%}
			\tabularnewline
			\hline
		\end{tabular}
		\par\end{centering}
	\caption{AC and ACT statistics}\label{tab:t3-1}
\end{table}
\vspace{-3mm}

In summary, computational results demonstrate that: i) the computed
average curvature of AC is small compared with $ M $ and the computed maximum curvature; 
ii) the percentage of good iterations of AC lies in a suitable
range; and iii) AC has the best performance in terms of running time.

\subsection{Support Vector Machine} \label{subsec:SVM}
This subsection presents the performance of AC-ACG for solving a support vector machine problem. Given data points $ \{(x_i,y_i)\}_{i=1}^p $, where $ x_i\in \R^n $ is a feature vector and $ y_i\in \{-1,1\} $ denotes the corresponding label, we consider the SVM
problem defined as
\begin{equation}\label{eq:SVM}
\min_{z\in \R^n} \frac1p \sum_{i=1}^{p} \ell(x_i,y_i;z)+\frac{\lam}{2}\|z\|^2+I_{{\cal B}_r} (z)
\end{equation}
for some $ \lam, r>0 $, where $ \ell(x_i,y_i;\cdot)=1-\tanh(y_i\inner{\cdot}{x_i}) $ is a nonconvex sigmoid loss function and $ I_{{\cal B}_r} (\cdot) $ is the indicator function of the ball $ B_r:=\{z\in \R^n:\|z\|\le r\} $.
The SVM problem \eqref{eq:SVM} is an instance of nonconvex SCO problems \eqref{eq:PenProb2Intro} where
\[
f(z)=\frac1p \sum_{i=1}^{p} \ell(x_i,y_i;z)+\frac{\lam}{2}\|z\|^2, \quad h(z)=I_{{\cal B}_r} (z).
\]
Clearly, $ f $ is differentiable everywhere and its gradient is $M$-Lipschitz continuous where
\begin{equation}\label{eq:M-SVM}
M = \frac1p \sum_{i=1}^p L_i + \lam, \quad L_i=\frac{4\sqrt{3}}{9}\|x_i\|^2 \ \ \forall i=1,\ldots,p.
\end{equation}
Since no sharper $m<M$ satisfying the first inequality in \eqref{ineq:curv}
is known, we simply set $m=M$.

We generate synthetic data sets as follows: for each data point $ (x_i , y_i) $, $ x_i $ is drawn from the uniform distribution on $ [0,1]^n$ and is sparse with 5\% nonzero components, and $ y_i=\text{sign}(\inner{\bar z}{x_i}) $ for some $ \bar z \in B_r$. We consider four different problem sizes $ (n,p)$, i.e., $ (1000,500) $, $ (2000, 1000) $, $ (3000, 1000) $ and $ (4000, 500 )$. We set $ \lam = 1/p$ and $ r = 50 $.

We start all seven methods from the same initial point $ z_0 $ that
is chosen randomly from the uniform distribution within the ball $ B_r $. 
The parameter $ \alpha$ is set to $ 0.5 $ in both AC and ACT.

Numerical results of the seven methods are given in Table \ref{tab:t4} and some statistic measures of AC and ACT are given in Table \ref{tab:t4-1}. The explanation of
their columns excluding the first one is the same as those of Tables \ref{tab:t1}-\ref{tab:t3-1} (see the two paragraphs preceding Table \ref{tab:t1}).
Their first columns differ from those of Tables \ref{tab:t1}-\ref{tab:t3-1} in that they only list the value of $M$ computed according to \eqref{eq:M-SVM}.
The best objective function values obtained by all seven methods are not reported since they are essentially the same on all instances.
The number of resolvent evaluations is 1 in NC, 2 in AG, AC and ACT, 1 or 2 in NM, 1 on average in AD, and 3 on average in UP.
The bold numbers highlight the method that has the best performance in an instance of the problem. The numbers marked with * indicate that the maximum number of iterations has been reached.




\begin{table}[H]
	\begin{centering}
		\begin{tabular}{|>{\centering}p{.5cm}|>{\centering}p{1cm}>{\centering}p{1cm}>{\centering}p{1cm}>{\centering}p{2cm}>{\centering}p{2cm}|>{\centering}p{.6cm}>{\centering}p{.6cm}>{\centering}p{.6cm}>{\centering}p{1.5cm}>{\centering}p{1.5cm}|}
			\hline 
			{$M$}
			& \multicolumn{5}{c|}{Iteration Count} & \multicolumn{5}{c|}{Running Time (s)}
			\tabularnewline  
			\cline{2-11}
			&  {\small{}AG} & {\small{}NM} & {\small{}UP}& {\small{}NC/AD} & {\small{}ACT/AC} & {\small{}AG} & {\small{}NM} & {\small{}UP}& {\small{}NC/AD} & {\small{}ACT/AC}
			\tabularnewline
			\hline 
			{\small{}$13$} & {\small{}37384} & {\small{}42532} & {\small{}130} & {\small{}42533/12274} & {\small{}583/546} & {\small{}639} & {\small{}649} & {\small{}8} &{\small{}233/188}& {\small{}9/\textbf{6}}  \tabularnewline
			\hline
			{\small{}$25$} & {\small{}112562} & {\small{}123551} & {\small{}278} & {\small{}174845/21127} & {\small{}1017/1131} & {\small{}4419} & {\small{}4486} & \textbf{\small{}39} &{\small{}5833/1836} &{\small{}93/60}  \tabularnewline
			\hline
			{\small{}$38$}  & {\small{}155503} & {\small{}163197} & {\small{}401} & {\small{}500000*/71991} & {\small{}1208/1032} & {\small{}12636} & {\small{}12101} & {\small{}97} &{\small{}26258*/8957} &{\small{}168/\textbf{95}}  \tabularnewline
			\hline
			{\small{}$50$}  & {\small{}79752} & {\small{}79064} & {\small{}247} & {\small{}172535/12450} & {\small{}730/615} & {\small{}4406} & {\small{}5264} & {\small{}44} &{\small{}5503/1033} & {\small{}65/\textbf{39}} \tabularnewline
			\hline
		\end{tabular}
		\par\end{centering}
	\caption{Numerical results for AG, NM, UP, NC, AD, ACT and AC}\label{tab:t4}
\end{table}
\vspace{-5mm}

\begin{table}[H]
	\begin{centering}
		\begin{tabular}{|>{\centering}p{1.5cm}|>{\centering}p{1cm}>{\centering}p{1cm}>{\centering}p{1cm}|>{\centering}p{1cm}>{\centering}p{1cm}>{\centering}p{1cm}|}
			\hline 
			{$M$}
			& \multicolumn{3}{c|}{AC} & \multicolumn{3}{c|}{ACT}
			\tabularnewline  
			\cline{2-7}
			& {\small{}Max} & {\small{}Avg} & {\small{}Good} & {\small{}Max} & {\small{}Avg} & {\small{}Good}
			\tabularnewline
			\hline 
			{\small{}13} & {\small{}0.25} & {\small{}0.05} & {\small{}67\%} & {\small{}0.06} & {\small{}0.05} & {\small{}71\%}
			\tabularnewline \hline
			{\small{}25} & {\small{}0.47} & {\small{}0.06} & {\small{}65\%} & {\small{}0.08} & {\small{}0.06} & {\small{}69\%}
			\tabularnewline
			\hline
			{\small{}38} & {\small{}0.34} & {\small{}0.07} & {\small{}63\%} & {\small{}0.10} & {\small{}0.07} & {\small{}66\%}
			\tabularnewline \hline
			{\small{}50} & {\small{}0.18} & {\small{}0.07} & {\small{}71\%} & {\small{}0.11} & {\small{}0.07} & {\small{}74\%}
			\tabularnewline
			\hline
		\end{tabular}
		\par\end{centering}
	\caption{AC and ACT statistics}\label{tab:t4-1}
\end{table}
\vspace{-3mm}



In summary, computational results demonstrate that: i) the computed
average curvature of AC is small compared with $ M $ and the computed maximum curvature; 
ii) the percentage of good iterations of AC lies in a suitable
range; and iii) AC is either the best method or close to the best one in terms of running time.

\subsection{Sparse PCA} \label{subsec:SPCA}

This subsection considers a penalized version of the sparse PCA problem,
namely,
\begin{equation}\label{eq:penalty}
\min _{X, Y\in \R^{p\times p}}-\inner{\hat \Sigma}{X}_{F}+\frac{\mu}{2}\|X\|_F^2+Q_{\lam, b}(Y)+\lam \|Y\|_1+\frac{\beta}{2}\|X-Y\|_F^2+I_{\mathcal{F}^{r}}(X),
\end{equation}
where 
the dataset consists of
an empirical covariance matrix $ \hat \Sigma \in \R^{p\times p}$, two regularization parameters $ \mu>0$ and $\lam>0$,
a penalty parameter $\beta>0$ and two scalars $b>0$ and $ r\in \mathbb{N}_+ $. Moreover, $\|\cdot\|_1$ and $ Q_{\lambda, b}(\cdot)$
are the matrix $1$-norm and a decomposable nonconvex penalty function defined as
\[
\|Y\|_1:=\sum_{i, j=1}^{p}\left|Y_{i j}\right|, \quad
Q_{\lambda, b}(X):=\sum_{i, j=1}^{p} q_{\lambda, b}\left(X_{i j}\right)
\]
where
\[
q_{\lam, b}(t) := \left\{ \begin{array}{cc}  
-\frac{t^{2}}{2 b}, & \mbox{if $|t| \leq b \lambda$};  \\ [.1in]
\frac{b \lambda^{2}}{2}-\lambda|t|, & \mbox{otherwise} 
\end{array} \right.
\]
and $ I_{\mathcal{F}^{r}}(\cdot) $ is the indicator function of the Fantope
\[
\mathcal{F}^{r}:=\{X\in {\cal S}^n : 0\preceq X \preceq I \text { and } \operatorname{tr}(X)=r\}.
\]
Clearly, problem \eqref{eq:penalty} is an instance of the nonconvex SCO problem \eqref{eq:PenProb2Intro} where
\[
f(X,Y)=-\inner{\hat \Sigma}{X}_{F}+\frac{\mu}{2}\|X\|_F^2+Q_{\lam,b}(Y)+\frac{\beta}{2}\|X-Y\|_F^2, \quad h(X,Y)=I_{\mathcal{F}^{r}}(X) + \lam \|Y\|_1.
\]
Moreover, it is easy to see that the pair
\begin{equation}\label{eq:M-SPCA}
(M,m)=\left( \max\left\lbrace  \mu+2\beta,\frac 1b \right\rbrace ,\frac 1b\right) 
\end{equation}
satisfies assumption (A2).

We discuss how synthetic datasets are generated. Let $ \Sigma\in \R^{p\times p} $ be an unknown covariance matrix and $ X^* $ be the projection matrix onto the $ r $-dimensional principal subspace of $ \Sigma $.
In the sparse PCA problem, we seek an $ s $-sparse approximation $ X $ of $ X^* $ in the sense that $ \|\text{diag}(X)\|_0\le s $, where $ s\in \mathbb{N}_+$. 
We generate four datasets by designing four covariance matrices $ \Sigma $ as described in \cite{gu2014sparse} and list all required parameters in Table \ref{tab:t5}.
For each covariance matrices $ \Sigma $, we sample $ n = 80  $ i.i.d. observations from the normal distribution $ {\mathcal N} (0, \Sigma) $ and then calculate the sample covariance matrix $ \hat \Sigma $.

\begin{table}[H]
	\begin{centering}
		\begin{tabular}{>{\centering}p{1.2cm}|>{\centering}p{1cm}|>{\centering}p{1cm}|>{\centering}p{1cm}|>{\centering}p{1cm}|>{\centering}p{1cm}|>{\centering}p{1cm}|>{\centering}p{1cm}}
			\hline 
			{dataset} & $ s $ & $ r $ & $ p $ & $ b $ & $ \beta $ & $ \mu $ & $ \lam $ \tabularnewline
			\hline 
			{\small\text{I}}  & {\small{}10} & {\small{}5} & {\small{}1200} & {\small{}3} & {\small{}0.33} & {\small{}1.67} & {\small{}0.25}\tabularnewline \hline
			{\small\text{II}} & {\small{}10} & {\small{}5} & {\small{}1200} &  {\small{}3} & {\small{}0.33} & {\small{}3.33} & {\small{}1}\tabularnewline \hline
			{\small\text{III}}  & {\small{}5} & {\small{}1} & {\small{}1200} & {\small{}3} & {\small{}30} & {\small{}3} & {\small{}5}\tabularnewline \hline
			{\small\text{IV}}  & {\small{}5} & {\small{}1} & {\small{}1200} & {\small{}3} & {\small{}30} & {\small{}0.67} & {\small{}1}\tabularnewline 
			\hline  
		\end{tabular}
		\par\end{centering}
	\caption{Synthetic datasets for the sparse PCA problem}\label{tab:t5}
\end{table}
\vspace{-3mm}

All seven methods are started from the same initial point $ (X_0,Y_0)$ that are chosen as follows.
For datasets I and II, we set $ X_0=Y_0$ to be a diagonal matrix with the first five diagonal entries equal to
1 and the other entries equal zero.
For datasets III and IV, we set $ X_0=Y_0 $ with the first diagonal entry being 1 and any other entries being 0.
We observe that the initial points were chosen differently so as to guarantee that 
they are feasible (i.e., lie in $\dom h$) for their respective instances.
The parameter $ \alpha$ is set to $ 0.5 $ in both AC and ACT.

Numerical results of the seven methods are given in Table \ref{tab:t6} and some statistic measures of AC and ACT are given in Table \ref{tab:t6-1}. The explanation of
their columns excluding the first one is the same as those of Tables \ref{tab:t4} and \ref{tab:t4-1}, respectively.
Their first columns differ from those of Tables \ref{tab:t4} and \ref{tab:t4-1} in that the value of $M$ is computed according to \eqref{eq:M-SPCA}.
The best objective function values obtained by all seven methods are not reported since they are essentially the same on all instances.
The number of resolvent evaluations is 1 in NC, 2 in AG, AC and ACT, 1 or 2 in NM, 1 on average in AD, and 3 on average in UP.
The bold numbers highlight the method that has the best performance in an instance of the problem.

%
%

\begin{table}[H]
	\begin{centering}
		\begin{tabular}{|>{\centering}p{.8cm}|>{\centering}p{0.6cm}>{\centering}p{.6cm}>{\centering}p{.6cm}>{\centering}p{1.2cm}>{\centering}p{1.4cm}|>{\centering}p{1cm}>{\centering}p{1cm}>{\centering}p{1cm}>{\centering}p{2cm}>{\centering}p{2cm}|}
			\hline 
			{$M$}
			& \multicolumn{5}{c|}{Iteration Count} & \multicolumn{5}{c|}{Running Time (s)}
			\tabularnewline  
			\cline{2-11}
			&  {\small{}AG} & {\small{}NM} & {\small{}UP}& {\small{}NC/AD} & {\small{}ACT/AC} & {\small{}AG} & {\small{}NM} & {\small{}UP}& {\small{}NC/AD} & {\small{}ACT/AC}
			\tabularnewline
			\hline 
			{\small{}$2.33$} & {\small{}21} & {\small{}{18}} & {\small{}7} & {\small{}15/31} & {\small{}18/15} & {\small{}8.63} & {\small{}{4.96}} & {\small{}6.71} &{\textbf{\small{}4.50}/10.70}& {\small{}9.70/7.33}  \tabularnewline
			\hline
			{\small{}$4$} & {\small{}7} & {\small{}{9}} & {\small{}8} & {\small{}13/12} & {\small{}9/7} & {\small{}10.08} & \textbf{\small{}{2.73}} & {\small{}7.55} &{\small{}4.42/4.01} &{\small{}4.66/3.94}  \tabularnewline
			\hline
			{\small{}$63$}  & {\small{}32} & {\small{}{43}} & {\small{}18} & {\small{}81/48} & {\small{}43/27} & {\small{}19.91} & {\small{}{12.06}} & {\small{}17.61} &{\small{}22.54/16.05} &{\small{}24.08/\textbf{12.04}}  \tabularnewline
			\hline
			{\small{}$60.67$}  & {\small{}35} & {\small{}{46}} & {\small{}17} & {\small{}84/52} & {\small{}48/31} & {\small{}19.01} & {\small{}{14.28}} & {\small{}16.97} &{\small{}24.31/17.05} & {\small{}26.70/\textbf{12.51}} \tabularnewline
			\hline
		\end{tabular}
		\par\end{centering}
	\caption{Numerical results for AG, NM, UP, NC, AD, ACT and AC}\label{tab:t6}
\end{table}
\vspace{-5mm}

\begin{table}[H]
	\begin{centering}
		\begin{tabular}{|>{\centering}p{1.5cm}|>{\centering}p{1cm}>{\centering}p{1cm}>{\centering}p{1cm}|>{\centering}p{1cm}>{\centering}p{1cm}>{\centering}p{1cm}|}
			\hline 
			{$M$}
			& \multicolumn{3}{c|}{AC} & \multicolumn{3}{c|}{ACT}
			\tabularnewline  
			\cline{2-7}
			& {\small{}Max} & {\small{}Avg} & {\small{}Good} & {\small{}Max} & {\small{}Avg} & {\small{}Good}
			\tabularnewline
			\hline 
			{\small{}2.33} & {\small{}2.00} & {\small{}0.72} & {\small{}67\%} & {\small{}2.83} & {\small{}0.90} & {\small{}83\%}
			\tabularnewline \hline
			{\small{}4} & {\small{}3.67} & {\small{}3.41} & {\small{}71\%} & {\small{}5.02} & {\small{}5.02} & {\small{}89\%}
			\tabularnewline
			\hline
			{\small{}63} & {\small{}44.41} & {\small{}31.12} & {\small{}89\%} & {\small{}43.59} & {\small{}43.55} & {\small{}98\%}
			\tabularnewline \hline
			{\small{}60.67} & {\small{}36.00} & {\small{}28.26} & {\small{}94\%} & {\small{}41.55} & {\small{}41.39} & {\small{}98\%}
			\tabularnewline
			\hline
		\end{tabular}
		\par\end{centering}
	\caption{AC and ACT statistics}\label{tab:t6-1}
\end{table}
\vspace{-3mm}



In summary, computational results demonstrate that: i) the computed
average curvature of AC is close to the computed maximum curvature; 
ii) the percentage of good iterations of AC lies in a suitable
range; and iii) AC is either the best method or close to the best one in terms of running time.

\subsection{Matrix Completion} \label{subsec:MC}

This subsection focuses on a constrained version of the nonconvex low-rank matrix completion problem.
Before stating the problem, we first give a few definitions.
Let $\Omega$ be a subset of $\{1,\ldots,l\} \times \{1,\ldots,n\}$
and let $ \Pi_\Omega $ denote the linear operator that maps a matrix $ A $ to the matrix whose entries
in $\Omega$ have the same values of the corresponding ones in $A$ and
whose entries outside of $\Omega$ are all zero.
Also, for given parameters $\beta>0$ and $\theta>0$,
let $p:\R\to\R_+ $ denote the log-sum penalty defined as
\[
p(t)= p_{\beta,\theta}(t) := \beta\log\left( 1+\frac{|t|}{\theta}\right) .
\]

The constrained version of the nonconvex low-rank matrix completion problem considered in this subsection is
\begin{equation}\label{eq:MC}
\min_{Z\in\R^{l\times n}}\left\lbrace  \frac12 \|\Pi_\Omega(Z-O)\|_F^2+
\mu\sum_{i=1}^{r}p(\sigma_i(Z)) : Z \in {\cal B}_R \right\rbrace
\end{equation}
where $R$ is a positive scalar, ${\cal B}_R := \{ Z \in \R^{l \times n} : \|Z\|_F \le R\}$,
$O \in \R^{\Omega}$ is an incomplete observed matrix, $ \mu>0 $ is a parameter,
$ r := \min \{l,n\}$ and $\sigma_i(Z)$ is the $ i $-th singular value of $ Z $.
The above problem differs from the one considered in \cite{yao2017efficient} in that it adds the constraint
$\|Z\|_F \le R$ to the latter one.

The matrix completion problem in \eqref{eq:MC} is equivalent to
\begin{equation}\label{eq:MC struc}
\min_{Z\in\R^{l\times n}} f(Z)+h(Z),
\end{equation}
where
\begin{align*}
f(Z)=\frac12 \|\Pi_\Omega(Z-O)\|_F^2+ \mu\sum_{i=1}^{r}[p(\sigma_i(Z))-p_0\sigma_i(Z)], \\
h(Z)=\mu p_0\|Z\|_* + I_{{\cal B}_R}(Z), \quad p_0=p'(0)=\frac{\beta}{\theta}
\end{align*}
and $ \|\cdot\|_*$ denotes the nuclear norm defined as
$ \|\cdot\|_* := \sum_{i=1}^r \sigma_i(\cdot)$.
Note that the inclusion of the constraint $ Z \in {\cal B}_R $ in \eqref{eq:MC} implies that the above composite function $h$ has
bounded domain and hence satisfies assumption (A3).
It is proved in \cite{yao2017efficient} that the second term in the definition of $f$, i.e.,
$ \mu\sum_{i=1}^{r}[p(\sigma_i(\cdot))-p_0\sigma_i(\cdot)]$,
is concave and $ 2\mu\tau $-smooth
where $ \tau=\beta/\theta^2 $, so $ f $ is nonconvex and smooth. Since $ h $ is convex and nonsmooth, the problem in \eqref{eq:MC struc} falls into the general class of nonconvex SCO problems \eqref{eq:PenProb2Intro}. It is easy to see that the pair
\begin{equation}\label{eq:M-MC}
(M,m)=(\max\{1,2\mu\tau\},2\mu\tau )
\end{equation}
satisfies assumption (A2).

We use the {\it MovieLens} dataset\footnote{http://grouplens.org/datasets/movielens/}
to obtain the observed index set $\Omega$ and the incomplete observed matrix $ O $.
The dataset includes a sparse matrix with 100,000 ratings of \{1,2,3,4,5\} from 943 users on 1682 movies, namely $ l=943 $ and $ n=1682 $.
The radius $ R $ is chosen as the Frobenius norm of the matrix of size $ 943 \times 1682 $
containing the same entries as $ O $ in $ \Omega $ and 5 in the entries outside of $ \Omega $.

We start all seven methods from the same initial point $ Z_0 $ that is sampled from the standard Gaussian distribution and is within ${\cal B}_R$.
The parameter $ \alpha$ is set to $ 0.5 $ in AC and $0.1$ in ACT.

Numerical results of the seven methods are given in Table \ref{tab:t7} and some statistic measures of AC and ACT are given in Table \ref{tab:t7-1}. 
The format of Table \ref{tab:t7} is similar to that of Table \ref{tab:t6} with the exception that the second to sixth columns provide the function values of \eqref{eq:MC} at the last iteration and the numbers of iterations for all seven methods.
Note that the first columns of Tables \ref{tab:t7} and \ref{tab:t7-1} give the value of $ M $ computed according to \eqref{eq:M-MC}.
The number of resolvent evaluations is 1 in NC, 2 in AG, AC and ACT, 1 or 2 in NM, 1 on average in AD, and 3 on average in UP.
The bold numbers highlight the method that has the best performance in an instance of the problem.




\begin{table}[H]
	\begin{centering}
		\begin{tabular}{|>{\centering}p{.6cm}|>{\centering}p{.8cm}>{\centering}p{.8cm}>{\centering}p{.8cm}>{\centering}p{1.7cm}>{\centering}p{1.7cm}|>{\centering}p{.6cm}>{\centering}p{.6cm}>{\centering}p{.6cm}>{\centering}p{1.7cm}>{\centering}p{1.8cm}|}
			\hline 
			{\small{}$M$} & \multicolumn{5}{c|}{\makecell{\small{}Function Value / \\ \small{}Iteration Count}}
			& \multicolumn{5}{c|}{\small{}Running Time (s)}
			\tabularnewline  
			\cline{2-11}
			& {\small{}AG}& {\small{}NM} & {\small{}UP} & {\small{}NC/AD} & {\small{}ACT/AC}  & {\small{}AG} & {\small{}NM} & {\small{}UP} & {\small{}NC/AD} & {\small{}ACT/AC}
			\tabularnewline
			\hline 
			{\small{}4.4} & {\small{}2257\\3856} & {\small{}\textbf{1809}\\1036} & {\small{}2605\\521} & {\small{}2628/2625\\  {4780/1674}} & {\small{}2252/2288\\  {5420/765}}
			& {\small{}4568} & {\small{}1033} & {\small{}1545} & {\small{}3925/1946} & {\small{}5803/\textbf{833}}
			\tabularnewline \hline
			{\small{}8.9} & {\small{}3886\\ 9158} & {\small{}\textbf{3359}\\1617} & {\small{}4261\\576} & {\small{}4246/4203\\ {9751/1794}} & {\small{}3846/3884\\  {8726/968}}
			& {\small{}10251} & {\small{}1605} & {\small{}1621} & {\small{}7901/1930} & {\small{}8806/\textbf{1065}}
			\tabularnewline
			\hline
			{\small{}20} & {\small{}4282\\22902} & {\small{}\textbf{3635}\\2875} & {\small{}4637\\676} & {\small{}4641/4582\\ {22259/2209}} & {\small{}4282/4267\\  {13031/1079}}
			& {\small{}29274} & {\small{}2836} & {\small{}1914} & {\small{}15912/2364} & {\small{}13869/\textbf{1200}}
			\tabularnewline \hline
			{\small{}30} & {\small{}5967\\37032} & {\small{}\textbf{5237}\\3717} & {\small{}6753\\606} & {\small{}6380/6293\\ {32223/1963}} & {\small{}5963/5975\\  {18267/1085}}
			& {\small{}41673} & {\small{}4182} & {\small{}1628} & {\small{}22265/2104} & {\small{}19913/\textbf{1214}}
			\tabularnewline
			\hline
		\end{tabular}
		\par\end{centering}
	\caption{Numerical results for AG, NM, UP, NC, AD, ACT and AC}\label{tab:t7}
\end{table}
\vspace{-5mm}

\begin{table}[H]
	\begin{centering}
		\begin{tabular}{|>{\centering}p{1.5cm}|>{\centering}p{1cm}>{\centering}p{1cm}>{\centering}p{1cm}|>{\centering}p{1cm}>{\centering}p{1cm}>{\centering}p{1cm}|}
			\hline 
			{$M$}
			& \multicolumn{3}{c|}{AC} & \multicolumn{3}{c|}{ACT}
			\tabularnewline  
			\cline{2-7}
			& {\small{}Max} & {\small{}Avg} & {\small{}Good} & {\small{}Max} & {\small{}Avg} & {\small{}Good}
			\tabularnewline
			\hline 
			{\small{}4.4} & {\small{}1.00} & {\small{}0.31} & {\small{}96\%} & {\small{}1.00} & {\small{}0.45} & {\small{}99\%}
			\tabularnewline \hline
			{\small{}8.9} & {\small{}1.00} & {\small{}0.28} & {\small{}94\%} & {\small{}1.39} & {\small{}0.48} & {\small{}99\%}
			\tabularnewline
			\hline
			{\small{}20} & {\small{}0.99} & {\small{}0.25} & {\small{}91\%} & {\small{}2.65} & {\small{}0.72} & {\small{}99\%}
			\tabularnewline \hline
			{\small{}30} & {\small{}0.97} & {\small{}0.23} & {\small{}89\%} & {\small{}4.36} & {\small{}1.13} & {\small{}96\%}
			\tabularnewline
			\hline
		\end{tabular}
		\par\end{centering}
	\caption{AC and ACT statistics}\label{tab:t7-1}
\end{table}
\vspace{-3mm}



In summary, computational results demonstrate that: i) the computed
average curvature for AC is small compared with $ M $ and the computed maximum curvature;
ii)
the percentage of good iterations of AC lies in a suitable
range; and iii) AC has the best performance in terms of running time.
Although AC uses the least amount of time to terminate, NM finds solutions with the smallest objective function values.

\subsection{Nonnegative Matrix Factorization} \label{subsec:NMF}


This subsection focuses on the following NMF problem
\begin{equation}\label{eq:NMF}
\min \left \{f(X,Y):=\frac{1}{2}\|A-XY\|_F^2:X\ge0,Y\ge0\right\},
\end{equation}
where $ A\in \R^{n\times l} $, $ X\in \R^{n\times p} $ and $ Y\in \R^{p\times l} $, which have been thoroughly
studied in the literature (see e.g.,\ \cite{gillis2014and,lee1999learning}).

This subsection reports the efficiency of directly using all seven methods to
solve \eqref{eq:NMF} without making use of its two-block structure. We use the facial image dataset provided by AT\&T Laboratories Cambridge\footnote{https://www.cl.cam.ac.uk/research/dtg/attarchive/facedatabase.html} to construct
the matrix $A$. More specifically, this dataset consists of 400 images, and each of those contains $ 92\times112 $ pixels with 256 gray levels per pixel. It results in an $ n \times l = 10, 304\times400 $ matrix $ A $ whose columns are the vectorized images. The dimension $ p $ is set to 20.

We start all seven methods from the same initial point $ (X_0,Y_0)= (\mathbf{1}^{n\times p}/(np), \mathbf{1}^{p\times l}/(pl)) $, where $ \mathbf{1}^{n\times p} $ and $ \mathbf{1}^{p\times l} $ are matrices of all ones of sizes
$ n\times p $ and $ p\times l $, respectively. We estimate
$ M $ in \eqref{ineq:curv} as $ M=100\times {\cal C}\left( (X_0,Y_0), (0,0)\right) $
where $ {\cal C}(\cdot,\cdot) $ is defined in \eqref{eq:cal C}.
Since no sharper $m<M$ satisfying the first inequality in \eqref{ineq:curv}
is known, we simply set $m=M$.
The parameter $ \alpha$ is set to $ 0.7 $ in both AC and ACT.

Numerical results for the seven methods are given in Table \ref{tab:t8}. The bold numbers highlight the method that has the best performance in the problem. 
The best objective function values obtained by all seven methods are not reported since they are essentially the same.




\begin{table}[H]
	\begin{centering}
		\begin{tabular}{|>{\centering}p{2cm}|>{\centering}p{3cm}|>{\centering}p{3cm}|}
			\hline 
			{Method} & {Iteration Count}& {Running time(s)}\tabularnewline
			\hline 
			{\small\text{AG}}  & {\small{}786} & {\small{}73.03} \tabularnewline
			\hline
			{\small\text{NM}} & {\small{}162} & {\small{}14.91} \tabularnewline
			\hline
			{\small\text{UP}} & {\small{}37} &  {\small{}11.12} \tabularnewline
			\hline
			{\small\text{NC}}  & {\small{}656} &  {\small{}41.67} \tabularnewline
			\hline
			{\small\text{AD}} & {\small{}44} &  {\small{}5.21} \tabularnewline
			\hline
			{\small\text{ACT}}  & {\small{}41} & {\small{}6.54} \tabularnewline
			\hline  
			{\small\text{AC}}  & {\small{}36} & \textbf{\small{}4.70} \tabularnewline
			\hline  
		\end{tabular}
		\par\end{centering}
	\caption{Numerical results for AG, NM, UP, NC, AD, ACT and AC}\label{tab:t8}
\end{table}
\vspace{-5mm}

\section{Concluding remarks}\label{sec:conclusion}
This paper presents an average curvature accelerated composite gradient method, namely, the AC-ACG method, for solving the N-SCO problem which is based on the average of all observed curvatures.
More specifically, as opposed to other ACG variants,
which use a known Lipschitz constant or a backtracking procedure that searches for a good upper curvature $ M_k $, AC-ACG
uses the average of all observed curvatures to compute $M_k$
(see \eqref{eq:M}) and always accepts the first
computed iterate according to \eqref{eq:update}
no matter whether $ M_k $ is good or not.
A nice feature of AC-ACG is that its convergence rate bound
is expressed in terms of $ M_k $ rather than an upper curvature $ M\ge \bar M $.



We now discuss some possible extensions of this paper. 
First, numerical results show that the AC variant,
which computes $ C_k $ 
as in \eqref{eq:AC}, performs substantially better than previous ACG variants as well as the ACT variant,
which is closer to the main method analyzed in
this paper, namely, AC-ACG.
However, convergence rate analysis of AC (possibly with $\gamma$ and $\alpha$ satisfying \eqref{eq:alpha}) is an
interesting open problem.
Second, the AC-ACG method performs two resolvent evaluations of $ h $ per iteration. It would be desirable to develop
 AC-ACG variants
which only perform one resolvent evaluation of $ h $ per iteration.
Third, the analysis of AC-ACG
assumes that assumption (A3) holds,
i.e., $\dom h$  is bounded.
It would be interesting to
develop a variant of it with
a provably iteration-complexity
similar to the one in this paper
without assuming (A3).


\section{Acknowledgements}
We are grateful to Guanghui Lan and Saeed Ghadimi for providing the code for the UPFAG method of their paper \cite{LanUniformly}.
We are also grateful to the two anonymous referees and the associate editor Defeng Sun for
providing helpful comments on earlier versions of this manuscript.

\bibliographystyle{plain}
\bibliography{Proxacc_ref}

\appendix
\section{A technical result}\label{Appendix A}

Recall the definition of a good upper curvature of $f$ given above \eqref{ineq:descent}.

\begin{lemma}\label{lem:descent}
	If $ M_k $ is a good upper curvature of $ f $ at $ \tx_k $ and $y_{k+1}=y(\tx_k;M_k)$ where $ y(\cdot;\cdot) $ is defined in \eqref{eq:update}, then
	\begin{equation}\label{ineq:implication}
	\phi(y_{k+1})  \le \phi(\tx_k) - \frac{M_k}{2}\|y_{k+1}-\tx_k\|^2.
	\end{equation}
\end{lemma}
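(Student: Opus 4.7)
My plan is to combine two one-line inequalities: the strong-convexity bound at the minimizer of the prox subproblem, and the good upper curvature hypothesis.

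First I would observe that, since $\ell_f(\cdot;\tx_k)$ is affine and $h$ is convex, the objective
\[
\Psi(x) := \ell_f(x;\tx_k) + h(x) + \frac{M_k}{2}\|x - \tx_k\|^2
\]
defining $y(\tx_k;M_k)$ in \eqref{eq:update} is $M_k$-strongly convex, and by hypothesis $y_{k+1} = y(\tx_k;M_k)$ is its unique minimizer. The standard strong-convexity bound at a minimizer, applied with test point $\tx_k$, gives $\Psi(\tx_k) \ge \Psi(y_{k+1}) + \frac{M_k}{2}\|\tx_k - y_{k+1}\|^2$. Evaluating the two sides and using $\Psi(\tx_k) = f(\tx_k) + h(\tx_k) = \phi(\tx_k)$, this simplifies to
\[
\phi(\tx_k) \ge \ell_f(y_{k+1};\tx_k) + h(y_{k+1}) + M_k\|y_{k+1} - \tx_k\|^2.
\]

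Next I would invoke the hypothesis that $M_k$ is a good upper curvature of $f$ at $\tx_k$, which is exactly the condition $\mathcal{C}(y_{k+1};\tx_k) \le M_k$. Using the definition of $\mathcal{C}(\cdot;\cdot)$ in \eqref{eq:cal C}, this rewrites as $f(y_{k+1}) \le \ell_f(y_{k+1};\tx_k) + \frac{M_k}{2}\|y_{k+1} - \tx_k\|^2$. Adding $h(y_{k+1})$ to both sides and substituting into the previous display yields
\[
\phi(\tx_k) \;\ge\; \phi(y_{k+1}) - \frac{M_k}{2}\|y_{k+1} - \tx_k\|^2 + M_k\|y_{k+1} - \tx_k\|^2 \;=\; \phi(y_{k+1}) + \frac{M_k}{2}\|y_{k+1} - \tx_k\|^2,
\]
which is \eqref{ineq:implication} upon rearrangement.

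There is essentially no obstacle here; this is the classical composite descent lemma and both ingredients are immediate from the definitions. The only point requiring care is bookkeeping of the quadratic terms: the strong-convexity step contributes a full $M_k\|y_{k+1}-\tx_k\|^2$ (one $M_k/2$ from the quadratic inside $\Psi(y_{k+1})$ and one $M_k/2$ from the strong-convexity remainder), while the upper-curvature step gives back $M_k/2$, leaving the stated $M_k/2$ coefficient.
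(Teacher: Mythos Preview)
Your proof is correct and follows essentially the same approach as the paper's: both combine the $M_k$-strong-convexity bound for the prox subproblem (tested at $\tx_k$) with the good upper curvature inequality $f(y_{k+1}) \le \ell_f(y_{k+1};\tx_k) + \frac{M_k}{2}\|y_{k+1}-\tx_k\|^2$. The only cosmetic difference is the order---the paper states the upper-curvature bound first and the strong-convexity bound second---and your bookkeeping of the quadratic coefficients matches exactly.
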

\begin{proof}
	Using the fact that $ M_k $ is a good upper curvature of $ f $ at $ \tx_k $ and \eqref{ineq:descent}, we have
	\begin{equation}\label{ineq:part1}
	\phi(y_{k+1}) \le \ell_f(y_{k+1};\tx_k) + h(y_{k+1}) + \frac{M_k}{2}\|y_{k+1}-\tx_k\|^2.
	\end{equation}
	It follows from the definition of $ y_{k+1} $, \eqref{eq:update} and the fact that the objective function in \eqref{eq:update} is $ M_k $-strongly convex that for every $ u\in \dom h $,
	\[
	\ell_f(u;\tx_k) + h(u) + \frac{M_k}{2}\|u-\tx_k\|^2\ge \ell_f(y_{k+1};\tx_k) + h(y_{k+1}) + \frac{M_k}{2}\|y_{k+1}-\tx_k\|^2 + \frac{M_k}{2}\|u-y_{k+1}\|^2,
	\]
	which together with $ u=\tx_k $ implies that
	\[
	\phi(\tx_k)\ge \ell_f(y_{k+1};\tx_k) + h(y_{k+1}) + M_k\|y_{k+1}-\tx_k\|^2.
	\]
	Now inequality \eqref{ineq:implication} immediately follows from \eqref{ineq:part1} and the above inequality.
\end{proof}

\end{document}